\newtheorem{thm}{Theorem}[section]
\newtheorem{cor}[thm]{Corollary}
\newtheorem{lem}[thm]{Lemma}
\theoremstyle{remark}
\newtheorem{rem}[thm]{Remark}
\theoremstyle{definition}
\newtheorem{defin}[thm]{Definition}
\newcommand{\uA}{\underline{A}}
\newcommand{\R}{\mathbb{R}}
\renewcommand{\H}{\mathbb{H}}
\newcommand{\C}{\mathbb{C}}
\newcommand{\Sp}{\mathbb{S}}
\renewcommand{\Re}{\mathrm{Re}}
\renewcommand{\Im}{\mathrm{Im}}
\title{An unconstrained {L}agrangian formulation and conservation laws for the {S}chr\"odinger map system}
\author[Paul Smith]{Paul Smith}
\address{University of California, Berkeley}
\email{smith@math.berkeley.edu}
\thanks{The author was supported by NSF grant DMS-1103877.}
\begin{document}

\begin{abstract}
We consider energy-critical Schr\"odinger maps from $\R^2$ 
into $\Sp^2$ and $\H^2$.
Viewing such maps with respect to orthonormal frames on the pullback bundle provides
a gauge field formulation of the evolution. 
We show that this gauge field system is the set of Euler-Lagrange equations corresponding to an action 
that includes a Chern-Simons term.
We also introduce the stress-energy tensor and derive conservation laws.
In conclusion we offer comparisons between Schr\"odinger maps and the closely related
Chern-Simons-Schr\"odinger system.
\end{abstract}

\maketitle


\section{Introduction}

Our main purpose in this article is to derive the Schr\"odinger maps system 
at the level of the differentiated, gauged system using a variational approach. 
In part this is to provide a basis for resolving a certain tension:
Schr\"odinger maps are usually introduced as constrained geometric evolution equations, 
whereas state-of-the-art results on Schr\"odinger maps are proved at the level
of the gauged system, with little if any reference to the underlying map.
These two formulations are related in a simple way. 
The gauged system is obtained by representing the differentiated 
Schr\"odinger maps system with respect to a space and time dependent orthonormal frame.
Using the Frobenius theorem, one recovers the Schr\"odinger map system from
the gauged system.
In spite of this close relationship, certain gaps have persisted in what might be called the
dictionary that translates between these two formulations.
In particular, at the level of maps, the equation is easily seen to be Hamiltonian, though
the variational formulation is not entirely satisfactory thanks to topological obstructions.
At the level of the differentiated system, topological obstructions cease to exist, though different
difficulties emerge, and only partial Hamiltonian and variational descriptions were known.
In this article we fill in this gap, providing a natural variational formulation and Hamiltonian.

In particular we study
the energy-critical Schr\"odinger map system with target $\Sp^2$ or target $\H^2$.
Our first result is a natural variational formulation of the differentiated, gauged system. 
That is, in \S \ref{Sec:Lagrangian} we introduce the action.
Next, in \S \ref{Sec:ConservationLaws}, we introduce a natural stress-energy tensor and derive conservation laws.
It is here that we introduce the Hamiltonian, as it may be rewritten in a simple way in terms of the stress-energy tensor. 
In \S \ref{Sec:CSS}, we take up comparing Schr\"odinger maps with the Chern-Simons-Schr\"odinger system, which is
suggested in part by a shared Chern-Simons term in their actions.
Finally, we consider in the Appendix gradient flow and solitons from the gauged point of view. These objects
are not only interesting in their own right but also are important because they are needed to construct the caloric gauge.

\subsection{Geometric map equations}

Suppose we have $\phi : \R^d \to M$, where $\R^d$ is Euclidean space, $M$ is a Riemannian manifold with metric $h$, and $\phi$ is a smooth map.
Consider the Lagrangian
\begin{equation} \label{Lag}
\frac12 \int_{\R^d} \langle \partial_j \phi, \partial_j \phi \rangle_{h(\phi(x))} dx
\end{equation}
where here and throughout we sum repeated Roman indices over all spatial variables.
The associated Euler-Lagrange equation is
\begin{equation} \label{HM}
(\phi^* \nabla)_j \partial_j \phi = 0
\end{equation}
the solutions of which are called \emph{harmonic maps}. 
Here $\nabla$ denotes the Levi-Civita connection on $M$ and $\phi^* \nabla$ denotes the pullback of this connection to $\R^d$.
The downward gradient flow associated to \eqref{Lag} generates the
\emph{harmonic map heat flow} equation
\begin{equation} \label{HMHF}
\partial_t \phi = (\phi^* \nabla)_j \partial_j \phi
\end{equation}
If the target manifold $M$ is K\"ahler with complex structure $J$,
then to derive a Schr\"odinger evolution variationally we need to introduce in the action a suitable term.
As this term ought only to carry one derivative, the natural pairing is with a 1-form.
A drawback of this Lagrangian formulation is that there can
be topological obstructions to global nonvanishing 1-forms, such as is the case
with $\Sp^2$. 
This particular case may be handled by first stereographically projecting to $\C$ and then 
on that level writing down a suitable action \cite{MaPaSo94, GrSt02}, though this procedure does not
genuinely circumvent the fundamental topological issue.
In any case, at the level of maps we are led to
the \emph{Schr\"odinger map} equation
\begin{equation} \label{SM}
\partial_t \phi = J(\phi) (\phi^* \nabla)_j \partial_j \phi
\end{equation}
Equation \eqref{SM} arises in ferromagnetism
as a Heisenberg model for a ferromagnetic spin system
and describes the classical spin
\cite{La67, PaTo91, MaPaSo94, ChShUh00, NaStUh03}.

Solutions of \eqref{HMHF} and \eqref{SM} are preserved by the rescalings
\[
\phi(t, x) \mapsto \phi(\lambda^2 t, \lambda x) \quad \quad \lambda > 0
\]
and solutions of \eqref{HM} are preserved by such scalings in the spatial variable.
For each of these equations, the natural energy is given by \eqref{Lag},
which also obeys a scaling law:
\[
E(\phi) := \frac12 \int_{\R^d} \langle \partial_j \phi, \partial_j \phi \rangle_{h(\phi(x))} dx,
\quad \quad
E(\phi(x)) = \lambda^{2-d} E(\phi(\lambda x))
\]
Energy is formally conserved by \eqref{SM}, and as noted the flow of \eqref{HMHF} is
the downward gradient flow associated to the energy.
In dimension $d = 2$, both the energy and the equations are preserved by rescalings, and for
this reason this is called the energy-critical setting. From now on we assume $d = 2$.

\subsection{Gauges}

One theme unifying the study of equations \eqref{HM}---\eqref{SM}
is the use of \emph{gauges} or \emph{moving frames}:
for each point in the domain, e.g.~each $(t, x) \in I \times \R^2$ in cases \eqref{HMHF} and \eqref{SM},
we choose an orthonormal basis of $TM_{\phi(t, x)}$.
Frames have been used extensively in studying harmonic maps \cite{He91}, and
their use in the setting of Schr\"odinger maps in proving wellposedness was initiated in \cite{ChShUh00}.
Our notation and perspective follow closely that in \cite[Chapter 6]{Tao06}.
In the energy-critical case with a surface as the target, we have one degree of freedom in our choice
of orthonormal frame for each $(t, x)$. 
For maps from $\R^2$ into $M \in \{\Sp^2, \H^2\}$ evolving on some time interval $I$, 
a gauge choice may be represented by the diagram 
\[
\begin{CD}
(I \times \R^2) \times \C @>e>> \phi^* TM @>>> TM \\
@AA\psi_\alpha A @AA\partial_\alpha \phi A @VV\pi V\\
I \times \R^2 @>id>> I \times \R^2 @>\phi >> M
\end{CD}
\]
Here, for each $\alpha \in \{0, 1, 2\}$, the partial derivative $\partial_\alpha \phi$
is interpreted as a section of the pullback bundle $\phi^* TM$, and each $\psi_\alpha$
is interpreted as a section of the pullback bundle $e^* \phi^* TM$.
Because the underlying manifold $M$ is complex, we complexify the tangent spaces
comprising the pullback bundles.
The map $e$ is identified with a choice of orthonormal frame on $\phi^* TM$ in the following way.
For each $(t, x) \in I \times \R^2$, choose an orthonormal
basis $\{v(t, x), J(\phi(t, x)) v(t, x)\}$ of $TM_{\phi(t, x)}$. 
Then let
$e(t, x) : \C \to TM_{\phi(t, x)}$ denote the linear transformation that acts
according to $z \mapsto \Re(z) v(t, x) + \Im(z) J(\phi(t, x)) v(t, x)$.
Through $e$, the Levi-Civita connection
pulls back to the covariant derivatives $D_\alpha := \partial_\alpha + i A_\alpha$, which generate curvatures
$F_{\alpha \beta} := \partial_\alpha A_\beta - \partial_\beta A_\alpha$. 
Orthonormality of the frame ensures $A_\alpha \in \R$.
The zero-torsion property of the connection enforces the compatibility condition $D_\alpha \psi_\beta = D_\beta \psi_\alpha$. 
Using the fact that $\Sp^2$ has constant curvature $+1$, one may calculate directly that 
$F_{\alpha \beta} = \Im(\bar{\psi}_\beta \psi_\alpha)$.
Similarly, the constant $-1$ curvature of $\H^2$ leads to
$F_{\alpha \beta} = -\Im(\bar{\psi}_\beta \psi_\alpha)$.
So that we can consider both cases
simultaneously, we write $F_{\alpha \beta} = \mu \Im(\bar{\psi}_\beta \psi_\alpha)$, taking $\mu = +1$
for the sphere and $\mu = -1$ for the hyperbolic plane.
Thus for any map $\phi$ and any choice of frame $e(t, x)$, it holds that
\[
F_{\alpha \beta} = \mu \Im(\bar{\psi}_\beta \psi_\alpha) \quad \quad \text{and} \quad \quad
D_\alpha \psi_\beta = D_\beta \psi_\alpha
\]
These relations are all preserved by the transformations
\begin{equation}\label{gauge-freedom}
\phi \mapsto e^{-i \theta} \phi
\quad \quad
A \mapsto A + d \theta
\end{equation}
where $\theta(t, x)$ is a compactly supported real-valued function (we only use
time-independent functions in the case of \eqref{HM-gauge}).
This gauge invariance corresponds to the freedom we have in the choice of frame $e(t, x)$.

Here and throughout we use $\partial_0$ and $\partial_t$ interchangeably.
We also adopt the convention that Greek indices are allowed to assume values
from the set $\{0, 1, 2\}$, whereas Roman indices are restricted to $\{1, 2\}$, meaning
that Roman indices indicate only spatial variables. 
Our summation conventions are that repeated Roman indices are summed over $\{1, 2\}$
and that repeated Greek indices are summed over $\{0, 1, 2\}$.

At the gauge field level, the energy-critical harmonic maps equation \eqref{HM} assumes the form
\begin{equation} \label{HM-gauge}
\begin{cases}
0 &= D_j \psi_j \\
F_{12} &= \mu \Im(\bar{\psi}_2 \psi_1) \\
D_1 \psi_2 &= D_2 \psi_1
\end{cases}
\end{equation}
The procedure for obtaining gauge field representations of evolution equations is slightly less
straightforward. For the harmonic map heat flow, for instance, we begin by pulling back the left
and right hand sides of equation \eqref{HMHF}:
\begin{equation} \label{HMHF-gauge0}
\psi_t = D_j \psi_j
\end{equation}
To obtain an evolution equation from \eqref{HMHF-gauge0}, we covariantly differentiate in a spatial
direction by applying $D_k$ and then invoke the compatibility condition $D_k \psi_t = D_t \psi_k$:
\[
D_t \psi_k = D_k D_j \psi_j
\]
By using the curvature relation to commute $D_k$ and $D_j$ and then applying the compatibility condition
$D_j \psi_k = D_k \psi_j$, we obtain a covariant heat equation for $\psi_k$.
All told, we arrive at the system
\begin{equation} \label{HMHF-gauge}
\begin{cases}
D_t \psi_k &= D_j D_j \psi_k - i F_{jk} \psi_j \\
F_{01} &= \mu \Im(\bar{\psi}_1 D_j \psi_j) \\
F_{02} &= \mu \Im(\bar{\psi}_2 D_j \psi_j) \\
F_{12} &= \mu \Im(\bar{\psi}_2 \psi_1) \\
D_1 \psi_2 &= D_2 \psi_1
\end{cases}
\end{equation}
Note that we have eliminated the field $\psi_t$.
The gauge field equations for Schr\"odinger maps are similarly derived. For
\eqref{SM}, the analogue of \eqref{HMHF-gauge0} is
\[
\psi_t = i D_j \psi_j
\]
and we arrive at the system
\begin{equation} \label{SM-gauge}
\begin{cases}
D_t \psi_k &= i D_j D_j \psi_k + F_{jk} \psi_j \\
F_{01} &= \mu \Re(\bar{\psi}_1 D_j \psi_j) \\
F_{02} &= \mu \Re(\bar{\psi}_2 D_j \psi_j) \\
F_{12} &= \mu \Im(\bar{\psi}_2 \psi_1) \\
D_1 \psi_2 &= D_2 \psi_1
\end{cases}
\end{equation}
\begin{rem}
All three of the above systems, i.e., \eqref{HM-gauge}, \eqref{HMHF-gauge}, and \eqref{SM-gauge},
are preserved by gauge transformations \eqref{gauge-freedom}.
In order to obtain well-defined flows, one must eliminate the gauge freedom by making a gauge
choice. See \cite[Chapter 6]{Tao06} for a survey of various gauge choices.
It appears that the best gauge for handling arbitrary Schr\"odinger maps (e.g., maps
without any symmetry assumption) is the caloric gauge,
which was introduced in \cite{Tao04} in the context of wave maps and first applied to
Schr\"odinger maps in \cite{BeIoKeTa11a}. The preferred gauge for studying 
Schr\"odinger maps with equivariant symmetry
and harmonic maps is the Coulomb gauge.
\end{rem}

\begin{rem} \label{Return}
It is natural to ask whether solutions of \eqref{HM-gauge}, \eqref{HMHF-gauge}, or \eqref{SM-gauge}
must arise from an underlying map. Assuming sufficient decay and regularity, this is indeed the case.
We demonstrate this for Schr\"odinger maps into $\Sp^2 \hookrightarrow \R^3$, where the embedding
is the usual one.
Let $\phi$ be a Schr\"odinger map and let $e_1, e_2$ denote the two vectors of the orthonormal frame $e$.
Define for $\alpha = 0, 1, 2$,
\[
\Phi = \begin{bmatrix} e_1 & e_2 & \phi \end{bmatrix},
\quad \quad
R_\alpha =
\begin{bmatrix}
0 & -A_\alpha & \Re(\psi_\alpha) \\
A_\alpha & 0 &  \Im(\psi_\alpha) \\
-\Re(\psi_\alpha) & -\Im(\psi_\alpha) & 0
\end{bmatrix}
\]
Then using \eqref{SM} and the definitions, we find that $\Phi, R$ satisfy the Mayer-Lie system
\begin{equation} \label{recover-eq}
\partial_\alpha
\Phi
=
\Phi R_\alpha \quad \quad \alpha = 0, 1, 2
\end{equation}
That \eqref{recover-eq} satisfies the Frobenius integrability condition may be described most succinctly
using the Maurer-Cartan 1-form $\omega = \Phi^{-1} d\Phi$, which satisfies
\begin{equation} \label{MC}
d \omega + \frac12 [\omega, \omega] = 0
\end{equation}
In this perspective $\Phi$ is interpreted as an element of $SO(3)$ and each 
$R_\alpha$ as an element of the corresponding Lie algebra $so(3)$.

To obtain $\Phi$ from $R$, we reverse the argument: if $(\psi, A)$ satisfy \eqref{SM-gauge}, then
the integrability condition \eqref{MC} is satisfied with $\omega = R_\alpha dx^\alpha$. If $(\psi, A)$ are rapidly decaying, then 
we can specify a (uniform) boundary condition for $\Phi$ at spatial infinity and 
recover $\Phi$ at all points by integrating in from infinity. 
If we have special structure such as equivariance, then instead we can specify $\Phi$ at a point $x \in \R^2$ and integrate out.

Analogous statements hold for $\H^2$ embedded in $\R^3$ endowed with the Minkowski metric. In that setting $\Phi$
is interpreted as an element of the Lorentz group $SO(2, 1)$ and the $R_\alpha$ as elements of the associated Lie algebra $so(2, 1)$.
For more details in the $\H^2$ setting and for additional related comments, see \cite[\S 2]{Tao04}.
\end{rem}

\subsection{Topology}

\begin{defin}
The \emph{charge} $c_1$ of a vector bundle over $\R^2$ with connection $A$ is the integral
\[
c_1 := \frac{1}{2\pi} \int_{\R^2} d\uA = \frac{1}{2\pi} \int_{\R^2} F_{12} dx^1 \wedge dx^2
\]
The charge is also known as the first Chern number.
\end{defin}
The ``underline" notation introduced here we will also use in the sequel:
an underlined form means that we take only the spatial components of that form.
For instance, if $A = A_0 dt + A_j dx^j$, then $\uA = A_j dx^j$.
\begin{lem}
For rapidly decaying solutions of \eqref{HMHF-gauge} or \eqref{SM-gauge},
charge is conserved, i.e.,
\[
\partial_t \frac{1}{2\pi} \int_{\R^2} F_{12} dx = 0
\]
\end{lem}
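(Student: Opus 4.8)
The plan is to show that the time derivative of the charge integral vanishes by expressing $\partial_t F_{12}$ in terms of spatial derivatives, which integrate to zero under the decay hypothesis. The natural starting point is the Bianchi-type identity for the curvature. Since $F_{\alpha\beta} = \partial_\alpha A_\beta - \partial_\beta A_\alpha$, a direct computation gives the algebraic identity
\begin{equation}\label{bianchi}
\partial_0 F_{12} + \partial_1 F_{20} + \partial_2 F_{01} = 0,
\end{equation}
which holds purely by the definition of $F$ in terms of $A$ and the commutativity of partial derivatives. Thus $\partial_t F_{12} = -\partial_1 F_{20} - \partial_2 F_{01} = \partial_1 F_{02} - \partial_2 F_{01}$, exhibiting $\partial_t F_{12}$ as a spatial divergence.

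Next I would integrate this over $\R^2$. We have
\[
\partial_t \frac{1}{2\pi} \int_{\R^2} F_{12}\, dx = \frac{1}{2\pi} \int_{\R^2} \left( \partial_1 F_{02} - \partial_2 F_{01} \right) dx.
\]
Provided the integrand is an honest spatial divergence of a rapidly decaying vector field, the divergence theorem forces the right-hand side to vanish. Here the relations $F_{01} = \mu \Re(\bar{\psi}_1 D_j \psi_j)$ and $F_{02} = \mu \Re(\bar{\psi}_2 D_j \psi_j)$ from \eqref{SM-gauge} (and the corresponding imaginary-part expressions for \eqref{HMHF-gauge}) are what guarantee decay: since $\psi$ and its covariant derivatives decay rapidly, so do $F_{01}$ and $F_{02}$, and hence the boundary terms at spatial infinity drop out. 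I would carry out this step for both systems at once, noting that only the decay of the bilinear expressions matters, not their precise form.

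The only genuine subtlety, and the step I expect to require the most care, is justifying the interchange of $\partial_t$ with the spatial integral and the vanishing of the boundary contribution. Both hinge on the ``rapidly decaying'' hypothesis: one must know that $F_{01}, F_{02}$ and their spatial derivatives decay fast enough that $\int_{\R^2}$ of a divergence is zero (equivalently, that the flux through large circles tends to zero), and that the integrands are dominated uniformly in $t$ on compact time intervals to differentiate under the integral sign. These are routine given sufficient decay and regularity, so I would state the decay assumption explicitly and invoke dominated convergence together with the divergence theorem. The essential content is thus the identity \eqref{bianchi}, which is independent of which evolution \eqref{HMHF-gauge} or \eqref{SM-gauge} we consider; the structural equations enter only through the decay of $F_{0j}$.
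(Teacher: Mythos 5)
Your proposal is correct and follows essentially the same route as the paper: the identity you call the Bianchi-type identity, $\partial_t F_{12} - \partial_1 F_{02} + \partial_2 F_{01} = 0$, is exactly the paper's equation \eqref{geocon}, which the paper derives in one line from $d^2A = 0$ and from which conservation of charge follows by integrating the resulting spatial divergence. Your additional remarks on differentiating under the integral sign and on using the $F_{0j}$ field equations to transfer decay from $\psi$ to the curvature are the routine justifications the paper leaves implicit.
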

\begin{proof}
Because $d^2 A = 0$ for any 1-form $A$,
\begin{equation} \label{geocon}
\partial_t F_{12} - \partial_1 F_{02} + \partial_2F_{01} = 0
\end{equation}
\end{proof}
A less obvious fact is that for the system \eqref{HM-gauge}, charge is \emph{quantized}, 
which is to say that it is integer-valued. At the level of maps, this follows
from the Gauss-Bonnet theorem and the fact that $d\uA$ is the pullback by the map
of the volume form on the target. Charge in fact characterizes the homotopy class.
To prove quantization at the gauge field level, one may exhaust $\R^2$ with nested discs,
apply Stokes' theorem to the integral of $d\uA$ over each disc, 
and then control the resulting integrals that arise on the boundary.
The field equations are of course essential in establishing quantization.
See \cite[Chapter 3]{MaSu04} for further discussion.

\section{Lagrangian formulation} \label{Sec:Lagrangian}

In this section we show that the system \eqref{SM-gauge} arises as the Euler-Lagrange
equations of a suitable gauge-invariant action. The difficulties encountered at the level
of the map do not arise. In view of Remark \ref{Return}, this furnishes a Lagrangian formulation
for the Schr\"odinger map system.
In carrying out variations we work formally, assuming smoothness of all quantities
and assuming that fields and variations are rapidly decaying.
\begin{thm} \label{thm:SchLag}
The energy-critical gauged Schr\"odinger map system \eqref{SM-gauge} is generated by the action
\[
\begin{split}
L_{Sch}(\psi, A) 
:=& \int_{\R^{2+1}} 
\left[ \Re(\bar{\psi}_2 D_t \psi_1) - \Im(\overline{D_j \psi_2} D_j \psi_1) \right] dx^1 \wedge dx^2 \wedge dt \\
& + \frac12 \int_{\R^{2+1}} (|\psi_1|^2 + |\psi_2|^2) dt \wedge dA +
\mu \frac12 \int_{\R^{2+1}} A \wedge dA
\end{split}
\]
provided that the compatibility condition $D_1 \psi_2 = D_2 \psi_1$ holds at the initial time.
\end{thm}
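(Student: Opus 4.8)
The plan is to treat $\psi_1,\psi_2$ (together with their conjugates $\bar\psi_1,\bar\psi_2$) and the three potentials $A_0,A_1,A_2$ as independent fields and to compute the Euler--Lagrange equation attached to each. Two preliminary reductions organize everything. First, since $dA = F_{01}\,dt\wedge dx^1 + F_{02}\,dt\wedge dx^2 + F_{12}\,dx^1\wedge dx^2$, the middle term of the action collapses to $\tfrac12\int (|\psi_1|^2+|\psi_2|^2)F_{12}\,dx^1\wedge dx^2\wedge dt$, so it couples only to $\psi$ and, through $F_{12}$, to $A_1,A_2$. Second, the Chern--Simons term varies by $\delta\int A\wedge dA = 2\int \delta A\wedge dA$ (the boundary term from integration by parts vanishes under the decay hypothesis), which in components contributes $\mu F_{12}$, $-\mu F_{02}$, and $\mu F_{01}$ to the variations in $A_0$, $A_1$, and $A_2$ respectively.

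I would first dispatch the variations that need no compatibility hypothesis. Varying $\bar\psi_2$ (which enters the kinetic term algebraically and the Dirichlet term through $\overline{D_j\psi_2}$), then integrating by parts in the spatial variables, yields after rescaling exactly the $k=1$ evolution equation $D_t\psi_1 = iD_jD_j\psi_1 - F_{12}\psi_2$ of \eqref{SM-gauge}; symmetrically, varying $\bar\psi_1$ (now integrating by parts in time against the kinetic term) produces the $k=2$ equation $D_t\psi_2 = iD_jD_j\psi_2 + F_{12}\psi_1$. Varying $A_0$, which occurs only inside $\Re(\bar\psi_2 D_t\psi_1)$ and in the Chern--Simons term, gives $-\Im(\bar\psi_2\psi_1)+\mu F_{12}=0$, i.e.\ the curvature relation $F_{12}=\mu\Im(\bar\psi_2\psi_1)$.

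The two spatial gauge variations are where the compatibility condition enters. Varying $A_1$ collects the algebraic dependence of the Dirichlet term on $A_1$, an integration by parts against $\tfrac12(|\psi_1|^2+|\psi_2|^2)F_{12}$, and the Chern--Simons contribution $-\mu F_{02}$. Using the elementary identity $\tfrac12\partial_j|\psi_k|^2 = \Re(\bar\psi_k D_j\psi_k)$ (valid since $A_j$ is real) to absorb the divergence term, the residue organizes into $F_{02}=\mu\Re(\bar\psi_2 D_j\psi_j) - \mu\Re(\bar\psi_1\chi)$, where $\chi := D_1\psi_2 - D_2\psi_1$ is the compatibility defect; the $A_2$ variation similarly gives $F_{01}=\mu\Re(\bar\psi_1 D_j\psi_j)+\mu\Re(\bar\psi_2\chi)$. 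Thus the Euler--Lagrange system is precisely \eqref{SM-gauge} except that the two mixed curvature relations carry correction terms proportional to $\chi$.

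The crux --- and the only place the hypothesis at the initial time is used --- is to show that $\chi$ vanishes identically once it vanishes at $t=0$; the plan is to derive a closed evolution equation for it. Differentiating $\chi = D_1\psi_2 - D_2\psi_1$ in time, commuting $D_t$ past the spatial derivatives via $[D_t,D_j]=iF_{0j}$, and substituting the two $\psi$-evolution equations expresses $D_t\chi$ in terms of $D_1D_jD_j\psi_2 - D_2D_jD_j\psi_1$ together with curvature terms. Applying the operator identity $[D_1,D_jD_j] = 2iF_{12}D_2 + i(\partial_2 F_{12})$ and its $A_2$-analogue extracts the covariant Laplacian, giving $D_t\chi = iD_jD_j\chi - F_{12}D_j\psi_j + i(F_{01}\psi_2 - F_{02}\psi_1)$. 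Substituting the already-derived curvature relations, the terms that are zeroth order in $\chi$ cancel by the pointwise identity $i\Re(\bar\psi_1 P)\psi_2 - i\Re(\bar\psi_2 P)\psi_1 = \Im(\bar\psi_2\psi_1)P$ (with $P=D_j\psi_j$), and what remains is the homogeneous linear equation $D_t\chi = iD_jD_j\chi + i\mu(\Re(\bar\psi_1\chi)\psi_1 + \Re(\bar\psi_2\chi)\psi_2)$. An $L^2$ energy estimate then closes the argument: since $\Re\int\bar\chi\,iD_jD_j\chi\,dx = 0$ and the remaining terms are bounded by $\|\chi\|_{L^2}^2$, Gr\"onwall forces $\chi\equiv0$, whereupon the two corrected relations reduce to the remaining equations of \eqref{SM-gauge}. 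The converse inclusion is the routine check that every solution of \eqref{SM-gauge} satisfies each Euler--Lagrange equation. I expect this propagation step to be the main obstacle, since it demands careful commutator bookkeeping and the verification that the $\chi$-corrections to $F_{01},F_{02}$ feed back only into harmless zeroth-order terms.
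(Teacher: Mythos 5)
Your proposal is correct and follows essentially the same route as the paper: the same Euler--Lagrange computations (with the two spatial gauge variations producing $F_{0j}$ relations carrying defect terms in $\chi = D_1\psi_2 - D_2\psi_1$), the same closed Schr\"odinger-type evolution equation $D_t\chi = iD_jD_j\chi + i\mu\left[\Re(\bar\psi_1\chi)\psi_1 + \Re(\bar\psi_2\chi)\psi_2\right]$, and the same $L^2$ energy estimate with Gr\"onwall to propagate $\chi \equiv 0$ from the initial time. The only differences are cosmetic---you vary $\bar\psi_k$ rather than $\psi_k$ and organize the commutator bookkeeping via $[D_1, D_jD_j]$ instead of first rewriting $D_t\psi_k$ in terms of $D_j\psi_j$ and $\chi$---and all of your intermediate identities agree exactly with the paper's.
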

\begin{proof}
We verify the claim by calculating the variation.

\noindent \textbf{Variation of $\psi$}.
The variation of $\psi_1, \psi_2$ respectively give rise to the $D_t \psi_2$ and $D_t \psi_1$ evolutions
of \eqref{SM-gauge}.
Under the variation $\psi_1 \mapsto \psi_1 + \varepsilon \phi$, the terms linear in $\varepsilon$
from 
\[
\Re(\bar{\psi}_2 D_t \psi_1), \quad -\Im(\overline{D_j \psi_2} D_j \psi_1), \quad \frac12 F_{12} |\psi_1|^2,
\]
are, respectively,
\[
\Re(\bar{\psi}_2 D_t \phi), \quad
- \Im(\overline{D_j \psi_2} D_j \phi), \quad
F_{12} \Re(\bar{\psi}_1 \phi)
\]
Integrating by parts in
\[
\int_{\R^{2+1}} \left[ \Re(\bar{\psi}_2 D_t \phi) - \Im(\overline{D_j \psi_2} D_j \phi) + F_{12} \Re(\bar{\psi}_1 \phi) \right] dx dt
\]
yields
\[
\int_{\R^{2+1}} \left[ -\Re(\bar{\phi} D_t \psi_2) - \Im(\bar{\phi} D_j D_j \psi_2) + F_{12} \Re(\bar{\phi} \psi_1) \right] dx dt
\]
which leads to the evolution equation
\[
D_t \psi_2 = i D_j D_j \psi_2 + F_{12} \psi_1
\]
Similarly, under the variation $\psi_2 \mapsto \psi_2 + \varepsilon \phi$ we obtain
the $\varepsilon$-linear terms
\[
\Re(\bar{\phi} D_t \psi_1), \quad
-\Im(\overline{D_j \phi} D_j \psi_1), \quad
F_{12} \Re(\bar{\phi} \psi_2)
\]
which lead to the evolution equation
\[
D_t \psi_1 = i D_j D_j \psi_1 - F_{12} \psi_2
\]

\noindent \textbf{Variation of $A$}. The variation of $A_0$ leads to the $F_{12}$ curvature equation.
Varying $A_1$ and $A_2$ respectively yield preliminary $F_{02}$ and $F_{01}$ equations.
To obtain the compatibility condition $D_1 \psi_2 = D_2 \psi_1$, we enforce it
at time zero and then show using Gronwall's inequality that the condition persists.
Once we have the compatibility condition,
we can substitute it back into the preliminary $F_{0j}$ equations to obtain the equations
appearing in \eqref{SM-gauge}.

Under the variation $A \to A + \varepsilon B$, we get from $\mu \frac12 \int A \wedge dA$ the $\varepsilon$-linear term
\[
\mu B \wedge dA
\]
which can be verified using Stokes and the fact that for 1-forms $A, B$ we have
$d(A \wedge B) = dA \wedge B - A \wedge dB$.
Upon expansion, the term appears as
\begin{equation} \label{AdA-variation}
\mu \int_{\R^{2+1}} \left( B_t F_{12} - B_1 F_{02} + B_2 F_{01} \right) dx dt
\end{equation}
From $\Re(\bar{\psi}_2 D_t \psi_1)$, we get from the variation of $A$ the $\varepsilon$-linear term
\begin{equation} \label{At-variation}
- B_t \Im(\bar{\psi}_2 \psi_1)
\end{equation}
As there are no other $A_t$ variation terms, we conclude from \eqref{AdA-variation}
and \eqref{At-variation} that
\begin{equation} \label{F12}
F_{12} = \mu \Im(\bar{\psi}_2 \psi_1)
\end{equation}
We also have $\varepsilon$ terms coming from the variation of the $A_j$.
In particular, $-\Im(\overline{D_j \psi_2} D_j \psi_1)$ contributes
\begin{equation} \label{var-2}
B_j \Re(\bar{\psi}_2 D_j \psi_1) - B_j \Re(\overline{D_j \psi_2} \psi_1)
\end{equation}
Finally, we have to handle
\[
\frac12 \int_{\R^{2+1}} (|\psi_1|^2 + |\psi_2|^2) dt \wedge dA
\]
To do so we first invoke Stokes to obtain
\[
\frac12 \int_{\R^{2 + 1}} d\left[ (|\psi_1|^2 + |\psi_2|^2) dt\right] \wedge A
\]
and then expand to get
\begin{equation} \label{Last-variation}
\frac12 \int_{\R^{2 + 1}} 
\left[ 
A_1 \partial_2 (|\psi_1|^2 + |\psi_2|^2)
- A_2 \partial_1 (|\psi_1|^2 + |\psi_2|^2)
\right] dx^1 \wedge dx^2 \wedge dt 
\end{equation}
Varying \eqref{Last-variation} with respect to $A$ and then expanding yields
the $\varepsilon$-linear terms
\begin{equation} \label{var-3}
\int_{\R^{2 + 1}} \left[ B_1 \Re(\bar{\psi}_1 D_2 \psi_1) + B_1 \Re(\bar{\psi}_2 D_2 \psi_2) 
- B_2 \Re(\bar{\psi}_1 D_1 \psi_1) - B_2 \Re(\bar{\psi}_2 D_1 \psi_2) \right] dx dt
\end{equation}
Comparing the $B_1$ terms in \eqref{AdA-variation}, \eqref{var-2}, and \eqref{var-3}
leads to
\[
\int \left[ \Re(\bar{\psi}_2 D_1 \psi_1) - \Re(\overline{D_1 \psi_2} \psi_1)
+ \Re(\bar{\psi}_1 D_2 \psi_1) + \Re(\bar{\psi}_2 D_2 \psi_2) - \mu F_{02} \right] = 0
\]
This yields
\begin{equation} \label{F02-prelim}
\mu F_{02} = \Re(\bar{\psi}_2 D_j \psi_j) + \Re(\bar{\psi}_1 (D_2 \psi_1 - D_1 \psi_2))
\end{equation}
Similarly, comparing $B_2$ terms leads to
\[
\int \left[ \Re(\bar{\psi}_2 D_2 \psi_1) - \Re(\overline{D_2 \psi_2} \psi_1) - \Re(\bar{\psi}_1 D_1 \psi_1) - \Re(\bar{\psi}_2 D_1 \psi_2) + \mu F_{01} \right] = 0
\]
and hence
\begin{equation} \label{F01-prelim}
\mu F_{01} = \Re(\bar{\psi}_1 D_j \psi_j) + \Re(\bar{\psi}_2 (D_1 \psi_2 - D_2 \psi_1))
\end{equation}
By direct calculation one may verify that \eqref{geocon} holds with 
\eqref{F12}, \eqref{F02-prelim}, and \eqref{F01-prelim}.

\noindent \textbf{The compatibility condition.}
Set
\[
\Theta := D_1 \psi_2 - D_2 \psi_1
\]
Then
\begin{equation} \label{DtTheta}
D_t \Theta = D_1 D_t \psi_2 - D_2 D_t \psi_1 + iF_{01} \psi_2 - i F_{02} \psi_1
\end{equation}
By direct calculation,
\[
D_t \psi_1 = i D_1 D_j \psi_j - i D_2 \Theta,
\quad \quad
D_t \psi_2 = i D_2 D_j \psi_j + i D_1 \Theta
\]
which, upon substituting into \eqref{DtTheta}, yield
\[
\begin{split}
D_t \Theta &= i(D_1 D_2 - D_2 D_1) D_j \psi_j + i D_j D_j \Theta + i F_{01} \psi_2 - i F_{02} \psi_1 \\
&= -F_{12} D_j \psi_j + i F_{01} \psi_2 - i F_{02} \psi_1 + i D_j D_j \Theta
\end{split}
\]
Invoking \eqref{F12}, \eqref{F02-prelim}, and \eqref{F01-prelim}, we find
\[
-F_{12} D_j \psi_j + i F_{01} \psi_2 - i F_{02} \psi_1
= \mu i \left[ \psi_2 \Re(\bar{\psi}_2 \Theta) + \psi_1 \Re(\bar{\psi}_1 \Theta) \right]
\]
Therefore
\[
D_t \Theta = i D_j D_j \Theta + \mu i \left[ \psi_2 \Re(\bar{\psi}_2 \Theta) + \psi_1 \Re(\bar{\psi}_1 \Theta) \right]
\]
so that in particular
\[
\Re(\bar{\Theta} D_t \Theta)
=
\partial_j \Re(\bar{\Theta} i D_j \Theta)
-
\mu \Im(\bar{\Theta} \left[ \psi_2 \Re(\bar{\psi}_2 \Theta) + \psi_1 \Re(\bar{\psi}_1 \Theta) \right])
\]
Consequently
\[
\partial_t \frac12  \int_{\R^2} |\Theta|^2 dx
\leq
\sup_{\R^2} \left( |\psi_1|^2 + |\psi_2|^2 \right) \int_{\R^2} |\Theta|^2 dx
\]
Therefore if $\Theta = 0$ at time zero, then we conclude by Gronwall's inequality that
$\Theta$ is zero for all later times for which the solution exists.
By time reversibility of the system, this means that the compatibility condition
\begin{equation} \label{compatibility}
D_1 \psi_2 = D_2 \psi_1
\end{equation}
holds for all times on the interval of existence provided that it holds at at least one point
in the interval.

Finally, by using the compatibility condition \eqref{compatibility} in \eqref{F01-prelim} and \eqref{F02-prelim}, we
recover the $F_{0j}$ equations of \eqref{SM-gauge}.
\end{proof}

\begin{rem}
The initial data of $(\psi, A)$ may be chosen in any way that is consistent with the curvature
constraints and compatibility condition.
\end{rem}

\begin{rem}
The time compatibility conditions $D_0 \psi_k = D_k \psi_0$ are not present because we have no need for---and therefore have not introduced---the derivative field $\psi_0$.
\end{rem}

\begin{rem}
A Lagrangian approach to Schr\"odinger maps into $\Sp^2$ appears in \cite{MaPaSo94},
though the Euler-Lagrange equations there derived do not include the compatibility
condition $D_1 \psi_2 = D_2 \psi_1$. Instead, such a constraint must be imposed.
One of the key differences between our action and that introduced in \cite{MaPaSo94} is that, instead
of using a term quartic in $\psi$, we introduce a term that is quadratic in $\psi$ and linear in $dA$, which has
the effect of coupling $\psi$ and $dA$.
\end{rem}

\section{Conservation laws} \label{Sec:ConservationLaws}

Some conservation laws are written at the gauge level in \cite{MaPaSo94}
and derived at the level of maps in \cite{GrSt02}.
Our approach here is at the gauge level, in the spirit of \cite{BeIoKeTa11, CoCzLe11}.
We begin by introducing the symmetric pseudo-stress-energy tensor $T_{\alpha \beta}$, defined by
\begin{equation} \label{Tensor}
\begin{cases}
T_{00} &= \frac12 (|\psi_1|^2 + |\psi_2|^2) \\
T_{0j} &= \Im(\bar{\psi}_\ell D_j \psi_\ell) \\
T_{jk} &= 2 \Re(\overline{D_j \psi_\ell} D_k \psi_\ell) - \delta_{jk} \Delta T_{00}
\end{cases}
\end{equation}
\begin{thm} \label{thm:laws}
Solutions $(\psi, A)$ of the energy-critical gauged Schr\"odinger map system \eqref{SM-gauge} 
satisfy the conservation law
\begin{equation} \label{law1}
\partial_\alpha T_{0 \alpha} = 0
\end{equation}
and the balance law
\begin{equation} \label{law2}
\partial_\alpha T_{j \alpha} = 2 F_{\alpha j} T_{0 \alpha}
\end{equation}
\end{thm}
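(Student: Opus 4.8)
The plan is to prove both identities by direct differentiation, the crucial simplification being that every entry of $T_{\alpha\beta}$ in \eqref{Tensor} is gauge invariant: each is assembled from the charge-$(+1)$ fields $\psi_\ell$ and their covariant derivatives paired against conjugates, so that the net charge is zero. For any such charge-zero product the Leibniz rule $\partial_\gamma(\bar f g)=\overline{D_\gamma f}\,g+\bar f\,D_\gamma g$ holds, which lets me replace ordinary derivatives by covariant ones at no cost. Throughout I would use the commutators $[D_\gamma,D_\delta]=iF_{\gamma\delta}$, the evolution equation of \eqref{SM-gauge}, the curvature constraints, and the compatibility condition \eqref{compatibility}. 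In fact I would first record that under \eqref{compatibility} the evolution collapses to $D_t\psi_\ell=iD_\ell(D_k\psi_k)$, since $D_kD_k\psi_\ell=D_\ell(D_k\psi_k)+iF_{k\ell}\psi_k$ exactly cancels the source $F_{k\ell}\psi_k$; this identity is the workhorse of the whole computation.

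For the conservation law \eqref{law1} I would compute $\partial_t T_{00}=\Re(\bar\psi_\ell D_t\psi_\ell)=-\Im(\bar\psi_\ell D_\ell D_k\psi_k)$. On the other side the Leibniz rule gives $\partial_j T_{0j}=\Im(\overline{D_j\psi_\ell}D_j\psi_\ell)+\Im(\bar\psi_\ell D_jD_j\psi_\ell)$; the first term vanishes as the imaginary part of the real quantity $|D_j\psi_\ell|^2$, and the second equals $\Im(\bar\psi_\ell D_\ell D_k\psi_k)$ once one writes $D_jD_j\psi_\ell=D_\ell D_k\psi_k+iF_{j\ell}\psi_j$ and discards $F_{j\ell}\Re(\bar\psi_\ell\psi_j)=0$ by the antisymmetry of $F$ against the symmetric $\Re(\bar\psi_\ell\psi_j)$. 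The two cancel. I expect this part to be routine.

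For the balance law \eqref{law2} I would differentiate $T_{0j}$ in time and $T_{jk}$ in space and add. The source on the right is produced by two commutators: pushing $D_t$ past $D_j$ in $\partial_t T_{0j}$ produces $iF_{0j}$ and hence $2F_{0j}T_{00}$, while pushing $D_k$ past $D_j$ in $\partial_k[2\Re(\overline{D_j\psi_\ell}D_k\psi_\ell)]$ produces $iF_{kj}$ and hence $2F_{kj}T_{0k}$; together these assemble $2F_{\alpha j}T_{0\alpha}$. The subtracted $\delta_{jk}\Delta T_{00}$ is present precisely to annihilate the leading third-order terms, its coefficient being fixed by the free Schr\"odinger case, so that after these cancellations all purely kinetic contributions drop out.

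The hard part will be disposing of the leftover curvature terms. Many cancel algebraically once the summation indices $k,\ell$ are relabeled and the antisymmetry of $F_{k\ell}$ is invoked. The genuinely delicate remainder is the contribution $2F_{k\ell}\Im(\overline{D_j\psi_\ell}\psi_k)$ that appears when the spatial derivative falls on the nonlinear source of the evolution equation; by the curvature constraint \eqref{F12}, namely $F_{12}=\mu\Im(\bar\psi_2\psi_1)$, this term organizes itself into the total derivative $\mu\,\partial_j(F_{12}^2)$. Showing that this last piece is accounted for, so that \eqref{law2} holds with exactly the stated right-hand side, is the crux of the argument; it is here that the precise definition of $T_{jk}$ and the curvature constraint \eqref{F12} must be brought to bear in concert, and it is the step where the bookkeeping is most error-prone.
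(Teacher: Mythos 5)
Your treatment of \eqref{law1} is correct: the collapsed evolution $D_t\psi_\ell = iD_\ell(D_k\psi_k)$ is a valid consequence of the compatibility condition, and the cancellation you describe goes through. (The paper proves \eqref{law1} slightly differently, component by component without invoking compatibility, but both arguments are sound.) The structural outline for \eqref{law2} — commutators producing $2F_{0j}T_{00}$ and $2F_{kj}T_{0k}$, the $\delta_{jk}\Delta T_{00}$ term absorbing the third-order contributions — also matches the paper's computation.

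The genuine gap is that your proposal stops at exactly the decisive point. You correctly identify that the curvature sources leave behind $\mu\,\partial_j(F_{12}^2)$, but you never show how that term is ``accounted for''; you only announce that doing so is the crux. And this gap cannot be closed, because the term does not cancel. The two curvature contributions in the time derivative of $T_{0j}$ are
\[
\partial_j\bigl[F_{k\ell}\Im(\bar\psi_\ell\psi_k)\bigr] = 2\mu\,\partial_j(F_{12}^2)
\qquad\text{and}\qquad
-2F_{k\ell}\Im(\overline{D_j\psi_\ell}\,\psi_k) = -\mu\,\partial_j(F_{12}^2),
\]
where in the first identity both index pairs $(k,\ell)=(1,2)$ and $(2,1)$ contribute equally, giving the factor $2$. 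Their sum is $+\mu\,\partial_j(F_{12}^2)$, not zero, so carrying the bookkeeping to the end yields
\[
\partial_\alpha T_{j\alpha} = 2F_{\alpha j}T_{0\alpha} + \mu\,\partial_j(F_{12}^2),
\]
i.e.\ the balance law as stated in the theorem is false. (The paper's own proof has the same defect: it asserts that the third term of \eqref{pent} is $\mu\partial_jF_{12}^2$, when it is in fact $2\mu\partial_jF_{12}^2$, and the claimed cancellation against the fourth term fails.) A concrete check: take $\mu=1$ and the stationary solution of \eqref{SM-gauge} coming from the degree-one harmonic map, $\psi_1 = \tfrac{2}{1+|x|^2}$, $\psi_2 = i\psi_1$, $A_1 = \tfrac{2x_2}{1+|x|^2}$, $A_2 = \tfrac{-2x_1}{1+|x|^2}$, $A_0 = 0$. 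One computes $F_{12} = -\tfrac{4}{(1+|x|^2)^2}$, $F_{0j}=0$, $T_{02} = \tfrac{-16x_1}{(1+|x|^2)^3}$, $T_{12}=0$, and $T_{11} = \tfrac{32}{(1+|x|^2)^4}$, so that
\[
\partial_k T_{1k} = \frac{-256\,x_1}{(1+|x|^2)^5},
\qquad
2F_{\alpha 1}T_{0\alpha} = 2F_{21}T_{02} = \frac{-128\,x_1}{(1+|x|^2)^5},
\]
and the discrepancy is exactly $\mu\,\partial_1(F_{12}^2) = \tfrac{-128\,x_1}{(1+|x|^2)^5}$.

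The repair is to absorb the extra term into the tensor: replace the definition in \eqref{Tensor} by $T_{jk} = 2\Re(\overline{D_j\psi_\ell}D_k\psi_\ell) - \delta_{jk}\bigl(\Delta T_{00} + \mu F_{12}^2\bigr)$, which restores \eqref{law2} verbatim. This is precisely parallel to the Chern--Simons--Schr\"odinger tensor of \S\ref{Sec:CSS}, where $F_{12} = -T_{00}$ and the tensor carries the extra $-\delta_{jk}T_{00}^2$; the later virial identities must then be adjusted by the corresponding $\mu F_{12}^2\,\Delta a$ term. So your instinct that this step is where the proof lives (and where errors hide) was exactly right, but a proof of the statement as written does not exist, and your proposal, by deferring that step, does not detect this.
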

\begin{proof}
First we establish \eqref{law1}.
Using the evolution equations in \eqref{SM-gauge}, we have
\[
\begin{split}
\frac12 \partial_t |\psi_1|^2 =
\Re(\bar{\psi}_1 D_t \psi_1) 
&= \Re(\bar{\psi}_1 i D_j D_j \psi_1) + \Re(\bar{\psi}_1 F_{j1} \psi_j) \\
&= \partial_j \Re(\bar{\psi}_1 i D_j \psi_1) + F_{21} \Re(\bar{\psi}_1 \psi_2)
\end{split}
\]
and
\[
\frac12 \partial_t |\psi_2|^2
=
\partial_j \Re(\bar{\psi}_2 i D_j \psi_2) + F_{12} \Re(\bar{\psi}_2 \psi_1)
\]
Consequently,
\[
\frac12 \partial_t (|\psi_1|^2 + |\psi_2|^2)
=
\partial_j \Re(\bar{\psi}_\ell i D_j \psi_\ell)
\]
Next we show \eqref{law2}, which is more involved.
We start by using the evolution and curvature conditions to obtain
\begin{align}
\partial_t T_{0j} 
&=\Im(\overline{D_t \psi_\ell} D_j \psi_\ell) + \Im(\bar{\psi}_\ell D_t D_j \psi_\ell) \nonumber \\
&= \Im(\overline{i D_k D_k \psi_\ell} D_j \psi_\ell) + \Im(\overline{F_{k \ell} \psi_k} D_j \psi_\ell) +  
\Im(\bar{\psi}_\ell D_j D_t \psi_\ell) + \Im(\bar{\psi}_\ell i F_{0j} \psi_\ell) \label{tToj}
\end{align}
The rightmost term of \eqref{tToj} can be rewritten as
\[
\Im(\bar{\psi}_\ell i F_{0j} \psi_\ell) = F_{0j}(|\psi_1|^2 + |\psi_2|^2) = 2 F_{0j} T_{00}
\]
In view of the evolution equation, curvature conditions, and compatibility condition,
the second-to-last term of \eqref{tToj} expands as
\begin{align}
\Im(\bar{\psi}_\ell D_j D_t \psi_\ell) 
&= \Im(\bar{\psi}_\ell i D_j D_k D_k \psi_\ell) + \Im(\bar{\psi}_\ell D_j(F_{k \ell} \psi_k)) \nonumber \\
&= \Im(\bar{\psi}_\ell i D_k D_j D_k \psi_\ell) - \Im(\bar{\psi}_\ell F_{jk} D_k \psi_\ell) + \Im(\bar{\psi}_\ell D_j(F_{k \ell} \psi_k)) \nonumber \\
&= \Im(\bar{\psi}_\ell i D_k D_j D_k \psi_\ell) + \Im(\bar{\psi}_\ell D_j(F_{k \ell} \psi_k)) + F_{kj} T_{0k} \nonumber \\
&= \partial_k \Im(\bar{\psi}_\ell i D_j D_k \psi_\ell) - \Im(\overline{D_k \psi_\ell} i D_j D_k \psi_\ell)
+ \Im(\bar{\psi}_\ell D_j(F_{k \ell} \psi_k)) + F_{kj} T_{0k} \label{take1}
\end{align}
Appealing only to the curvature conditions and compatibility condition, we rewrite
the first term of \eqref{tToj} as
\[
\begin{split}
\Im(\overline{i D_k D_k \psi_\ell} D_j \psi_\ell) 
&= \partial_k \Im(\overline{i D_k \psi_\ell} D_j \psi_\ell) - \Im(\overline{i D_k \psi_\ell} D_k D_j \psi_\ell) \\
&= \partial_k \Im(\overline{i D_\ell \psi_k} D_j \psi_\ell) - \Im(\overline{i D_\ell \psi_k} D_k D_j \psi_\ell)
\end{split}
\]
where we then rewrite $-\Im(\overline{i D_\ell \psi_k} D_k D_j \psi_\ell)$ as
\[
\begin{split}
-\Im(\overline{i D_\ell \psi_k} D_k D_j \psi_\ell)
&= - \Im(\overline{i D_\ell \psi_k} D_j D_k \psi_\ell) - \Im(\overline{i D_\ell \psi_k} i F_{kj} \psi_\ell) \\
&= - \Im(\overline{i D_\ell \psi_k} D_j D_k \psi_\ell) - F_{kj} \Im(\overline{D_k \psi_\ell} \psi_\ell) \\
&= - \Im(\overline{i D_\ell \psi_k} D_j D_k \psi_\ell) + F_{kj} T_{0k}
\end{split}
\]
so that
\begin{equation} \label{take2}
\Im(\overline{i D_k D_k \psi_\ell} D_j \psi_\ell) = 
\partial_k \Im(\overline{i D_\ell \psi_k} D_j \psi_\ell) - \Im(\overline{i D_\ell \psi_k} D_j D_k \psi_\ell) + F_{kj} T_{0k}
\end{equation}
Taking \eqref{take1} and \eqref{take2} together, we get
\[
\begin{split}
&\Im(\bar{\psi}_\ell D_j D_t \psi_\ell) + \Im(\overline{i D_k D_k \psi_\ell} D_j \psi_\ell)  \\
&\quad = \partial_k \Im(\bar{\psi}_\ell i D_j D_k \psi_\ell) +  \partial_k \Im(\overline{i D_\ell \psi_k} D_j \psi_\ell)
+ \Im(\bar{\psi}_\ell D_j(F_{k \ell} \psi_k)) + 2 F_{kj} T_{0k}
\end{split}
\]
Therefore
\begin{equation} \label{progress}
\begin{split}
\partial_t T_{0j} &= 2 F_{\alpha j} T_{0 \alpha}  \\
&\quad + \partial_k \Im(\bar{\psi}_\ell i D_j D_k \psi_\ell) +  \partial_k \Im(\overline{i D_\ell \psi_k} D_j \psi_\ell) +
\Im(\bar{\psi}_\ell D_j(F_{k \ell} \psi_k)) + \Im(\overline{F_{k \ell} \psi_k} D_j \psi_\ell)
\end{split}
\end{equation}
The last line of \eqref{progress} may be rewritten as
\begin{equation} \label{pent}
\partial_k \partial_j \Im(\bar{\psi}_\ell i D_k \psi_\ell) - 2 \partial_k \Im(\overline{D_j \psi_\ell} i D_k \psi_\ell)
+ \partial_j \Im(\bar{\psi}_\ell F_{k \ell} \psi_k) - 2 \Im(\overline{D_j \psi_\ell} F_{k\ell} \psi_k)
\end{equation}
The first term of \eqref{pent} can be rewritten as $\partial_j \Delta T_{00}$. The third term of \eqref{pent} is $\mu \partial_j F_{12}^2$.
For the fourth term, we have
\[
- 2 \Im(\overline{D_j \psi_\ell} F_{k\ell} \psi_k) = - 2 F_{12} \mu \partial_j F_{12} = - \mu \partial_j F_{12}^2
\]
Therefore we may rewrite \eqref{progress} as follows:
\[
\partial_t T_{0j} = 
- 2 \partial_k \Im(\overline{D_j \psi_\ell} i D_k \psi_\ell)
+ \partial_j \Delta T_{00} + 2 F_{\alpha j} T_{0 \alpha}
\]
\end{proof}

\begin{cor}
For rapidly decaying solutions of \eqref{SM-gauge}, the following quantity is conserved:
\begin{equation} \label{g-en}
\frac12 \int_{\R^2} \left( |\psi_1|^2 + |\psi_2|^2\right) dx
\end{equation}
\end{cor}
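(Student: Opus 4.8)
The plan is to recognize the conserved quantity \eqref{g-en} as the spatial integral of the energy density $T_{00}$ and then to read off the result from the conservation law \eqref{law1} established in Theorem \ref{thm:laws}. From the definition \eqref{Tensor} we have $T_{00} = \frac12(|\psi_1|^2 + |\psi_2|^2)$, so the quantity in question is exactly $\int_{\R^2} T_{00}\, dx$. Thus the corollary is simply the integrated, spatial form of the pointwise differential identity \eqref{law1}.

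First I would unpack the index convention in \eqref{law1}. Since Greek indices range over $\{0,1,2\}$ while Roman indices are restricted to $\{1,2\}$, the law $\partial_\alpha T_{0\alpha} = 0$ reads $\partial_t T_{00} + \partial_j T_{0j} = 0$, with the sum over $j$ taken over the two spatial directions. Solving for the time derivative gives the local balance $\partial_t T_{00} = -\partial_j T_{0j}$, in which the right-hand side is a spatial divergence.

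The remaining step is to differentiate under the integral sign and integrate by parts:
\[
\partial_t \frac12 \int_{\R^2} \left( |\psi_1|^2 + |\psi_2|^2 \right) dx
= \int_{\R^2} \partial_t T_{00}\, dx
= -\int_{\R^2} \partial_j T_{0j}\, dx = 0,
\]
where the last equality is the divergence theorem, the boundary contribution at spatial infinity vanishing because $T_{0j} = \Im(\bar{\psi}_\ell D_j \psi_\ell)$ is rapidly decaying. I expect no genuine obstacle here: the only points requiring care are the interchange of $\partial_t$ with the integral and the vanishing of the flux through large circles, and both are guaranteed by the standing smoothness and rapid-decay hypotheses on $(\psi, A)$. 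The substantive work has already been carried out in proving \eqref{law1}, of which this corollary is the immediate global consequence.
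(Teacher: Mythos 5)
Your proposal is correct and matches the paper's intended argument: the corollary is exactly the spatial integral of the conservation law \eqref{law1} from Theorem \ref{thm:laws}, with the flux term $\int_{\R^2} \partial_j T_{0j}\, dx$ vanishing by rapid decay. The paper offers no separate proof precisely because this integration step is immediate, and your careful unpacking of the index convention and the boundary term is consistent with the identity $\partial_t T_{00} = \partial_j \Re(\bar{\psi}_\ell i D_j \psi_\ell) = -\partial_j T_{0j}$ derived in the theorem's proof.
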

Note that \eqref{g-en} is simply \eqref{Lag} written at the level of frames.
\begin{lem}[Hamiltonian]
Let
\begin{equation} \label{Hamiltonian}
H_{Sch} := \int_{\R^2} \left( -\Im(\overline{D_j \psi_2} D_j \psi_1) + \frac12 (|\psi_1|^2 + |\psi_2|^2) F_{12}  \right) dx^1 \wedge dx^2
\end{equation}
Then, for rapidly decaying solutions of the gauged Schr\"odinger map system 
\eqref{SM-gauge}, it holds that
\[
H_{Sch} = \frac12 \int_{\R^2} \left( \partial_1 T_{02} - \partial_2 T_{01} \right) dx^1 \wedge dx^2 = 0
\]
\end{lem}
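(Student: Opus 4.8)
The plan is to establish the two asserted equalities in turn: first the identity $H_{Sch} = \frac12\int_{\R^2}(\partial_1 T_{02} - \partial_2 T_{01})\,dx^1\wedge dx^2$, and then the vanishing of the right-hand side. The first is a pointwise algebraic identity between integrands (modulo the field equations), while the second is a divergence/Stokes argument using decay.

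First I would expand $\partial_1 T_{02} - \partial_2 T_{01}$ starting from the definition $T_{0j} = \Im(\bar\psi_\ell D_j\psi_\ell)$. Since each summand $\bar\psi_\ell D_k\psi_\ell$ is gauge invariant (a charge-zero scalar), the ordinary derivative $\partial_j$ acts on it exactly as the covariant derivative, so the covariant Leibniz rule yields $\partial_j\Im(\bar\psi_\ell D_k\psi_\ell) = \Im(\overline{D_j\psi_\ell}\,D_k\psi_\ell + \bar\psi_\ell D_j D_k\psi_\ell)$. Applying this with $(j,k)=(1,2)$ and $(j,k)=(2,1)$ and subtracting, the second-derivative terms combine through the curvature commutator $(D_1 D_2 - D_2 D_1)\psi_\ell = iF_{12}\psi_\ell$ into $\Im(\bar\psi_\ell\, iF_{12}\psi_\ell) = F_{12}(|\psi_1|^2+|\psi_2|^2)$, which after the factor $\tfrac12$ matches the curvature term of $H_{Sch}$ exactly.

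The remaining first-derivative terms produce $\Im(\overline{D_1\psi_\ell}D_2\psi_\ell - \overline{D_2\psi_\ell}D_1\psi_\ell)$, and the conjugation antisymmetry $\Im(\bar z w) = -\Im(\bar w z)$ collapses this to $2\Im(\overline{D_1\psi_\ell}D_2\psi_\ell)$. It then remains to identify $\Im(\overline{D_1\psi_\ell}D_2\psi_\ell)$ with $-\Im(\overline{D_j\psi_2}D_j\psi_1)$, the kinetic integrand of $H_{Sch}$. This is where the compatibility condition \eqref{compatibility}, namely $D_1\psi_2 = D_2\psi_1$, enters: substituting it into the $\ell$-sum and again invoking the antisymmetry of $\Im$ converts one expression into the other term by term. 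I expect this reconciliation of the two forms of the cross term to be the main obstacle, since it is the only point at which one must invoke the field equations—through the compatibility condition guaranteed by Theorem \ref{thm:SchLag}—rather than pure algebra; one must track carefully which index is summed and apply the antisymmetry consistently, as a single sign slip would break the match.

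Finally, for the vanishing I would observe that $(\partial_1 T_{02} - \partial_2 T_{01})\,dx^1\wedge dx^2 = d\omega$ for the spatial $1$-form $\omega := T_{01}\,dx^1 + T_{02}\,dx^2$. By Stokes' theorem the integral equals a limit of boundary integrals over circles of radius $R\to\infty$, and these vanish because $\psi$ together with its covariant derivatives—and hence $T_{0j}$—are rapidly decaying. This gives $\frac12\int_{\R^2}(\partial_1 T_{02} - \partial_2 T_{01})\,dx^1\wedge dx^2 = 0$, completing the proof.
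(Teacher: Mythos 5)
Your proof is correct and essentially the same as the paper's: both establish the pointwise identity $-\Im(\overline{D_j \psi_2} D_j \psi_1) + \tfrac12 (|\psi_1|^2 + |\psi_2|^2) F_{12} = \tfrac12\left( \partial_1 T_{02} - \partial_2 T_{01} \right)$ using the covariant Leibniz rule, the curvature commutator $(D_1 D_2 - D_2 D_1)\psi_\ell = i F_{12}\psi_\ell$, the compatibility condition $D_1\psi_2 = D_2\psi_1$, and the antisymmetry of $\Im$, and then conclude the vanishing of the integral of the resulting exact form from rapid decay. The only difference is organizational: you expand the divergence side $\partial_1 T_{02} - \partial_2 T_{01}$ and work toward $H_{Sch}$, whereas the paper starts from the kinetic terms $\Im(\overline{D_j \psi_2} D_j \psi_1)$, applies compatibility first, and recovers the divergence form through a self-referential chain of equalities; the ingredients and the identity proved are identical.
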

\begin{proof}
Using the compatibility and curvature conditions, we calculate
\begin{equation} \label{string}
\begin{split}
\Im(\overline{D_1 \psi_2} D_1 \psi_1)
&=
\Im(\overline{D_2 \psi_1} D_1 \psi_1) \\
&=
\partial_1 \Im(\overline{D_2 \psi_1} \psi_1) - \Im(\overline{D_1 D_2 \psi_1} \psi_1) \\
&=
\partial_1 \Im(\overline{D_2 \psi_1} \psi_1) - \Im(\overline{D_2 D_1 \psi_1} \psi_1) - \Im(\overline{i F_{12} \psi_1} \psi_1)
\end{split}
\end{equation}
The right hand side we may expand as
\[
\partial_1 \Im(\overline{D_2 \psi_1} \psi_1) - \partial_2 \Im(\overline{D_1 \psi_1} \psi_1) + \Im(\overline{D_1 \psi_1} D_2 \psi_1) + F_{12} |\psi_1|^2
\]
which, by virtue of the string of equalities in \eqref{string}, is equal to $\Im(\overline{D_2 \psi_1} D_1 \psi_1)$.
This implies
\[
2 \Im(\overline{D_2 \psi_1} D_1 \psi_1)
=
\partial_1 \Im(\overline{D_2 \psi_1} \psi_1) - \partial_2 \Im(\overline{D_1 \psi_1} \psi_1) + F_{12} |\psi_1|^2
\]
and hence
\[
\Im(\overline{D_1 \psi_2} D_1 \psi_1) = \frac12 \left( \partial_1 \Im(\overline{D_2 \psi_1} \psi_1) - \partial_2 \Im(\overline{D_1 \psi_1} \psi_1) + F_{12} |\psi_1|^2 \right)
\]
By conjugating and reversing the roles of the indices, we similarly conclude
\[
\Im(\overline{D_2 \psi_2} D_2 \psi_1) = \frac12 \left(\partial_2 \Im(\bar{\psi}_2 D_1 \psi_2) - \partial_1 \Im(\bar{\psi}_2 D_2 \psi_2) + F_{12} |\psi_2|^2 \right)
\]
Therefore
\[
-\Im(\overline{D_j \psi_2} D_j \psi_1) + \frac12 (|\psi_1|^2 + |\psi_2|^2) F_{12}
=
\frac12 \left( \partial_1 \Im(\bar{\psi}_j D_2 \psi_j) - \partial_2 \Im(\bar{\psi}_j D_1 \psi_j) \right)
\]
\end{proof}
Define the virial potential and Morawetz action respectively by
\[
V_a(t) = \int_{\R^2} a(x) T_{00} dx,
\quad \quad
M_a(t) = \int_{\R^2} T_{0 j} \partial_j a  \;dx
\]
The conservation law \eqref{law1} followed by integration by parts implies
\[
\partial_t V_a(t) = M_a(t)
\]
We recover the generalized virial identity of \cite[Lemma 3.1]{CoCzLe11}, adapted
to the setting of Schr\"odinger maps.
\begin{lem}
Let $a:\R^2 \to \R$ and let $(\psi, A)$ be a solution of \eqref{SM-gauge}. Then
\[
M_a(T) - M_a(0) =
\int_0^T \int_{\R^2} \left[ 2 \Re(\overline{D_j \psi_\ell} D_k \psi_\ell) \partial_j \partial_k a  
- \delta_{jk} T_{00} \Delta^2 a + 2 F_{\alpha j} T_{0\alpha} \partial_j a \right] dx dt
\]
\end{lem}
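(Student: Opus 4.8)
The plan is to differentiate the Morawetz action $M_a(t) = \int_{\R^2} T_{0j} \partial_j a \, dx$ in time, move the derivative under the integral (justified by smoothness and rapid decay), and then feed in the balance law \eqref{law2} rather than working directly with the evolution equations. Since $a$ is time-independent, I have $\partial_t M_a = \int_{\R^2} (\partial_t T_{0j}) \partial_j a \, dx$. Writing out \eqref{law2} as $\partial_t T_{j0} + \partial_k T_{jk} = 2 F_{\alpha j} T_{0\alpha}$ and using the symmetry $T_{j0} = T_{0j}$, I can solve for the time derivative as $\partial_t T_{0j} = -\partial_k T_{jk} + 2 F_{\alpha j} T_{0\alpha}$. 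Substituting this identity produces $\partial_t M_a = \int_{\R^2} \left[ -\partial_k T_{jk} + 2 F_{\alpha j} T_{0\alpha} \right] \partial_j a \, dx$, so the $2 F_{\alpha j} T_{0\alpha} \partial_j a$ term already matches the claimed right-hand side and requires no further work.

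Next I would integrate by parts in the spatial variables to transfer the derivative $\partial_k$ off the stress tensor, giving $\int_{\R^2} -\partial_k T_{jk} \, \partial_j a \, dx = \int_{\R^2} T_{jk} \, \partial_j \partial_k a \, dx$, with the boundary term discarded by the rapid-decay hypothesis. Now I insert the explicit form $T_{jk} = 2\Re(\overline{D_j \psi_\ell} D_k \psi_\ell) - \delta_{jk} \Delta T_{00}$ from \eqref{Tensor}. The gradient part immediately yields $\int_{\R^2} 2\Re(\overline{D_j \psi_\ell} D_k \psi_\ell) \, \partial_j \partial_k a \, dx$, which is the first term of the desired integrand. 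The trace part contributes $-\int_{\R^2} \delta_{jk} (\Delta T_{00}) \partial_j \partial_k a \, dx = -\int_{\R^2} (\Delta T_{00})(\Delta a) \, dx$, and integrating by parts twice more (again dropping boundary terms) and using self-adjointness of the Laplacian converts this into $-\int_{\R^2} T_{00} \, \Delta^2 a \, dx$, matching the second term.

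Having assembled $\partial_t M_a = \int_{\R^2} \left[ 2\Re(\overline{D_j \psi_\ell} D_k \psi_\ell) \partial_j \partial_k a - T_{00} \Delta^2 a + 2 F_{\alpha j} T_{0\alpha} \partial_j a \right] dx$, the final step is simply to integrate in time over $[0, T]$, which gives $M_a(T) - M_a(0)$ on the left and the claimed space-time integral on the right. The calculation is essentially bookkeeping once the balance law is invoked, so the only real care needed is in the integrations by parts: I expect the main (mild) obstacle to be justifying that all boundary contributions vanish, particularly in the double integration by parts that moves both Laplacians onto $a$ to produce the biharmonic term $\Delta^2 a$. This is where the rapid-decay assumption on $(\psi, A)$ is used, and it is worth flagging that the symmetry of $T_{\alpha\beta}$ is what allows the clean replacement of $\partial_t T_{0j}$ via \eqref{law2} in the first place.
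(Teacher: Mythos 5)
Your proof is correct and follows essentially the same route as the paper: differentiate $M_a$, substitute the balance law \eqref{law2} using the symmetry $T_{j0}=T_{0j}$, integrate by parts to put $T_{jk}$ against $\partial_j\partial_k a$, and expand $T_{jk}$ via \eqref{Tensor} so the trace part becomes $-T_{00}\Delta^2 a$ after two more integrations by parts. The paper compresses all of this into one displayed identity (which, as written there, has a stray factor of $2$ on the $T_{jk}$ term), and your fuller version reproduces the lemma exactly.
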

\begin{proof}
Using the Morawetz action, balance law, and integration by parts, we have
\[
\partial_t M_a(t) = 2 \int_{\R^2} \left( T_{jk} \partial_k \partial_j a + F_{\alpha j} T_{0\alpha} \partial_j a \right) dx
\]
\end{proof}
\begin{cor}
If $a$ is convex, then we can further conclude that
\[
\int_0^T \int_{\R^2} \left( 2 F_{\alpha j} T_{0 \alpha} \partial_j a - \delta_{jk} T_{00} \Delta^2 a \right)  dx dt \lesssim \sup_{[0, T]} |M_a(t)|
\]
\end{cor}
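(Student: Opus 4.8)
The plan is to read the result off directly from the generalized virial identity of the preceding lemma, the only genuinely new ingredient being the sign afforded by convexity. First I would rearrange that identity to isolate the integrand appearing in the corollary, writing
\[
\int_0^T \int_{\R^2} \left( 2 F_{\alpha j} T_{0\alpha} \partial_j a - T_{00}\Delta^2 a \right) dx\,dt
= \bigl(M_a(T) - M_a(0)\bigr) - \int_0^T \int_{\R^2} 2\,\Re(\overline{D_j\psi_\ell}\,D_k\psi_\ell)\,\partial_j\partial_k a \;dx\,dt .
\]
Thus the corollary reduces to two assertions: that the subtracted double integral is nonnegative, and that the boundary contribution $M_a(T)-M_a(0)$ is controlled by $\sup_{[0,T]}|M_a(t)|$.

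The heart of the matter is the pointwise sign of $\Re(\overline{D_j\psi_\ell}\,D_k\psi_\ell)\,\partial_j\partial_k a$, and here I would treat both factors as real symmetric $2\times 2$ matrices indexed by the spatial indices $j,k$. The kinetic matrix $G_{jk}:=\Re(\overline{D_j\psi_\ell}\,D_k\psi_\ell)$ (with summation over $\ell$) is positive semidefinite, being a Gram matrix: for any real vector $v=(v_1,v_2)$ one has $G_{jk}v_jv_k=\sum_\ell |v_jD_j\psi_\ell|^2\ge 0$, where the inner $j$-summation is understood. Convexity of $a$ says exactly that its Hessian $H_{jk}:=\partial_j\partial_k a$ is positive semidefinite at every point. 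Since the trace of the product of two symmetric positive-semidefinite matrices is nonnegative, the contraction $G_{jk}H_{jk}=\tr(GH)$ is pointwise nonnegative, so that
\[
\int_0^T \int_{\R^2} 2\,\Re(\overline{D_j\psi_\ell}\,D_k\psi_\ell)\,\partial_j\partial_k a \;dx\,dt \ge 0 .
\]

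Dropping this nonnegative term yields the one-sided bound
\[
\int_0^T \int_{\R^2} \left( 2 F_{\alpha j} T_{0\alpha} \partial_j a - T_{00}\Delta^2 a \right) dx\,dt \le M_a(T) - M_a(0),
\]
and I would finish by estimating $M_a(T)-M_a(0)\le |M_a(T)|+|M_a(0)|\le 2\sup_{[0,T]}|M_a(t)|$, giving the claimed inequality with absolute implied constant $2$. Note that convexity produces only an upper bound, which is precisely what the corollary asserts.

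I expect the sole substantive step — hence the main obstacle — to be the matrix-positivity argument: recognizing $G_{jk}$ as a Gram matrix so that its contraction against the Hessian has a definite sign, and invoking the elementary fact that $\tr(GH)\ge 0$ for positive-semidefinite symmetric $G,H$. Everything else is bookkeeping. I would also verify that the rapid-decay and smoothness hypotheses standing behind the preceding lemma justify the integrations by parts and guarantee finiteness of $M_a$, so that the boundary term $M_a(T)-M_a(0)$ is well defined; under those assumptions the argument closes.
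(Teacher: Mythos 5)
Your proposal is correct and is exactly the argument the paper intends: the corollary is stated without proof as an immediate consequence of the generalized virial identity, using that the Gram matrix $\Re(\overline{D_j\psi_\ell}\,D_k\psi_\ell)$ contracted against the positive-semidefinite Hessian of $a$ is pointwise nonnegative, so that dropping it leaves the one-sided bound by $M_a(T)-M_a(0)\le 2\sup_{[0,T]}|M_a(t)|$. Your filling-in of the positive-semidefiniteness details (Gram structure plus $\tr(GH)\ge 0$) is precisely the implicit content of the paper's ``if $a$ is convex'' hypothesis.
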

Virial identities are established in the context of equivariant Schr\"odinger maps in \cite{BeIoKeTa11, BeIoKeTa12}.
For virial and Morawetz identities in the context of radial Schr\"odinger maps, see \cite{GuKo11}.

\section{Comparison with Chern-Simons-Schr\"odinger} \label{Sec:CSS}

In two spatial dimensions, the Chern-Simons-Schr\"odinger equation
arises as the second-quantization of a nonrelativistic anyon system.
For background, see \cite{JaTe81, DeJaTe82, Wi90, EzHoIw91, EzHoIw91b, JaPi91b, JaPi91, JaPiWe91, MaPaSo91}.
Local wellposedness at high regularity is established in \cite{BeBoSa95} using the Coulomb
gauge and at low-regularity for small data in \cite{LiSmTa12} using the heat gauge, which,
in the setting of Chern-Simons-Schr\"odinger systems,
appears to have been first introduced in \cite{De07}. Global wellposedness
for large data under an equivariance ansatz and the Coulomb gauge is established at the critical regularity
of $L^2$ in \cite{LiSm13}.

\noindent \textbf{Lagrangian formulation.}
The action is
\[
L(\phi, A) = \frac12 \int_{\R^{2+1}}  
\left[ \Im (\bar \phi D_t \phi) + |D_x \phi|^2 - \frac{g}{2} |\phi|^4 \right] dx^1 \wedge dx^2 \wedge dt +
\frac12 \int_{\R^{2+1}} A \wedge dA
\]
with Euler-Lagrange equations
\begin{equation} \label{CSS}
\begin{cases}
D_t \phi &= i D_\ell D_\ell \phi + i g \lvert \phi \rvert^2 \phi \\
F_{01} &= - \Im(\bar{\phi} D_2 \phi) \\
F_{02} &= \Im(\bar{\phi} D_1 \phi) \\
F_{12} &= -\frac{1}{2} \lvert \phi \rvert^2
\end{cases}
\end{equation}
both of which enjoy the gauge freedom \eqref{gauge-freedom}.
It is interesting to note that \eqref{CSS} is Galilean invariant,
whereas \eqref{SM-gauge} is not; the obstruction lies with the compatibility condition.
In the above $g$ is a coupling constant. The so-called ``critical coupling"
is $g = 1$, and this is what we consider below.

\noindent \textbf{Conservation laws.}
For the Chern-Simons-Schr\"odinger system \eqref{CSS}, we set, following \cite{CoCzLe11}, 
\[
\begin{cases}
T_{00} &= \frac12 |\phi|^2 \\
T_{0j} &= \Im(\bar{\phi} D_j \phi) \\
T_{jk} &= 2 \Re(\overline{D_j \phi} D_k \phi) - \delta_{jk}( T_{00} + \Delta) T_{00}
\end{cases}
\]
Here there is no distinction between the conservation law \eqref{law1} 
and the curvature relation \eqref{geocon}.
Note that for $\phi$ not identically zero we always have
\[
\int_{\R^2} d\uA = \int_{\R^2} F_{12} dx = - \int_{\R^2} T_{00} dx < 0
\]
The balance law \eqref{law2} is still valid in this context.
Its right hand side, however, vanishes thanks to
$F_{01} = - T_{02}$, $F_{02} = T_{01}$, and $F_{12} = - T_{00}$, so that
\begin{equation} \label{CSS-conserv}
\partial_\alpha T_{j \alpha} = 0
\end{equation}
The conserved energy for this system is
\[
E(\phi) := \frac12 \int_{\R^2} \left( |D_x \phi|^2 - \frac12 |\phi|^4 \right) dx
=
\frac14 \int_{\R^2} \left( T_{11} + T_{22} \right) dx
\]

\noindent \textbf{Virial identities.}
In spite of \eqref{CSS-conserv}, the focusing nature of the critical coupling adds
a term in the generalized virial identity with a sign that is unfavorable for establishing Morawetz estimates.
In particular, this term is the $-\delta_{jk} T_{00}^2$ term appearing in the definition of $T_{jk}$.
\begin{lem}
Let $a: \R^2 \to \R$ and let $(\phi, A)$ be a solution of \eqref{CSS}. Then
the Morawetz action $M_a(t)$
satisfies
\begin{equation} \label{Mora}
M_a(T) - M_a(0) =
\int_0^T \int_{\R^2}
\left[ 2 \Re(\overline{D_j \phi} D_k \phi) \partial_j \partial_k a - \delta_{jk} \left(T_{00} \Delta^2 a + T_{00}^2 \Delta a\right) \right] dx dt
\end{equation}
\end{lem}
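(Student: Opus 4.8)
The plan is to mirror the proof of the corresponding Morawetz identity for the gauged Schr\"odinger map system: differentiate the Morawetz action in time, reduce everything to the spatial components of the stress tensor, and then integrate the resulting identity over $[0,T]$. First I would observe that the symmetry of the pseudo-stress-energy tensor together with the conservation law \eqref{CSS-conserv} gives $\partial_t T_{0j} = \partial_t T_{j0} = -\partial_k T_{jk}$. The essential point here is that the balance law \eqref{law2} does remain valid for \eqref{CSS}, but, as recorded in the surrounding discussion, its right-hand side $2F_{\alpha j}T_{0\alpha}$ vanishes identically by virtue of the algebraic relations $F_{01} = -T_{02}$, $F_{02} = T_{01}$, and $F_{12} = -T_{00}$. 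This is exactly why the final identity for \eqref{CSS} carries no curvature contribution, in contrast to the Schr\"odinger map case.

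Next I would differentiate $M_a(t) = \int_{\R^2} T_{0j}\,\partial_j a\,dx$ under the integral sign, substitute $\partial_t T_{0j} = -\partial_k T_{jk}$, and integrate by parts in $x_k$ (the rapid-decay hypothesis kills the boundary terms) to reach
\[
\partial_t M_a(t) = \int_{\R^2} T_{jk}\,\partial_j \partial_k a \, dx.
\]
At this stage I would insert the definition of $T_{jk}$ for \eqref{CSS}, namely $T_{jk} = 2\Re(\overline{D_j \phi}\,D_k \phi) - \delta_{jk}(T_{00} + \Delta)T_{00}$. Contracting the Kronecker delta against $\partial_j \partial_k a$ produces a factor $\Delta a$, so the piece $-\delta_{jk}\Delta T_{00}$ contributes $-\int (\Delta T_{00})\,\Delta a\,dx$, which becomes $-\int T_{00}\,\Delta^2 a\,dx$ after integrating by parts twice, while the piece $-\delta_{jk}T_{00}^2$ contributes $-\int T_{00}^2\,\Delta a\,dx$. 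Integrating in time from $0$ to $T$ then produces \eqref{Mora}.

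The computation is essentially bookkeeping once the vanishing of the curvature term is secured, so I do not anticipate a serious analytic obstacle; every integration by parts is justified by the assumed decay. The one point demanding care is the provenance of the $-T_{00}^2 \Delta a$ term, which arises solely from the focusing quartic contribution $-\delta_{jk}T_{00}^2$ in the CSS stress tensor (a term absent from the Schr\"odinger map tensor). As the preceding discussion stresses, this is the summand whose sign is unfavorable for Morawetz estimates at the critical coupling $g = \tfrac12$, so I would keep it isolated throughout rather than folding it into the $\Delta^2 a$ term, both to make the identity transparent and to expose exactly the obstruction noted in the text.
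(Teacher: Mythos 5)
Your proof is correct and follows exactly the route the paper intends: it is the same argument used for the Schr\"odinger map Morawetz identity (differentiate $M_a$, use the balance law, integrate by parts in $x_k$, then substitute the spatial stress tensor), with the curvature term $2F_{\alpha j}T_{0\alpha}$ discarded via \eqref{CSS-conserv} and the extra $-\delta_{jk}T_{00}^2$ piece of the CSS tensor producing the $-T_{00}^2\Delta a$ term. Your bookkeeping of the $-\delta_{jk}\Delta T_{00}$ versus $-\delta_{jk}T_{00}^2$ contributions matches \eqref{Mora} precisely, so nothing is missing.
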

\begin{cor}
For $a = |x|^2$, it holds that
\begin{equation} \label{CSS-virial}
\partial_t^2 \int_{\R^2} |x|^2 T_{00} dx = 
\partial_t M_{\{a = |x|^2\}}(t) =
2 \int_{\R^2} \left( |D_x \phi|^2 - 2 T_{00}^2 \right) dx = 4 E(\phi)
\end{equation}
\end{cor}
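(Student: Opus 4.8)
The plan is to obtain \eqref{CSS-virial} as the specialization of the virial machinery to the quadratic weight $a = |x|^2$, reading off the three equalities in turn. For the first equality I would use the identity $\partial_t V_a(t) = M_a(t)$, which follows from the conservation law $\partial_\alpha T_{0\alpha} = 0$ (for \eqref{CSS} this is \eqref{law1}, coinciding with the curvature relation \eqref{geocon}) after a single integration by parts. With $a = |x|^2$ one has $V_a(t) = \int_{\R^2}|x|^2 T_{00}\,dx$, so differentiating once more in time gives $\partial_t^2\int_{\R^2}|x|^2 T_{00}\,dx = \partial_t M_a(t)$. The only thing to verify here is that the rapid-decay hypothesis lets me discard the boundary terms in that integration by parts and interchange $\partial_t$ with the spatial integral.

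For the second equality I would differentiate the Morawetz identity \eqref{Mora} in its upper limit, giving
\[
\partial_t M_a(t) = \int_{\R^2}\left[2\Re(\overline{D_j\phi}\,D_k\phi)\,\partial_j\partial_k a - T_{00}\,\Delta^2 a - T_{00}^2\,\Delta a\right]dx,
\]
and then substitute $a = |x|^2$. The essential structural point is that a quadratic weight has constant Hessian and vanishing bi-Laplacian, $\Delta^2 a = 0$; the latter annihilates the $T_{00}\,\Delta^2 a$ term outright, and it is precisely this feature that singles out $|x|^2$. Because the Hessian is a constant multiple of $\delta_{jk}$, the contraction $\Re(\overline{D_j\phi}\,D_k\phi)\,\partial_j\partial_k a$ collapses to a multiple of $|D_x\phi|^2$, while $T_{00}^2\,\Delta a$ is a constant multiple of $T_{00}^2$. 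Collecting constants leaves the displayed $2\int_{\R^2}(|D_x\phi|^2 - T_{00}^2)\,dx$.

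The third equality is then purely algebraic. At critical coupling the curvature relation reads $F_{12} = -\tfrac12|\phi|^2$, i.e.\ $T_{00} = \tfrac12|\phi|^2$, so $T_{00}^2 = \tfrac14|\phi|^4$ and the integrand becomes $|D_x\phi|^2 - \tfrac14|\phi|^4$, which is the energy density up to a constant. Matching against $E(\phi) = \tfrac12\int_{\R^2}(|D_x\phi|^2 - \tfrac14|\phi|^4)\,dx$ identifies the right-hand side as $4E(\phi)$, completing the chain.

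I do not expect a serious analytic obstacle; the content is structural rather than technical. The one genuinely important step is the observation in the second paragraph that only a quadratic weight simultaneously freezes the Hessian and kills the fourth-order term $\Delta^2 a$, so that the Morawetz integrand degenerates to the signed difference of kinetic and potential energy densities. The routine care lies in the bookkeeping of constants and in justifying the interchange of differentiation and integration through rapid decay; and, as emphasized before the lemma, it is the focusing sign of the $-T_{00}^2$ contribution that makes \eqref{CSS-virial} encode $4E(\phi)$ rather than a kinetic quantity alone.
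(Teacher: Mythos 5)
Your overall route is the intended one---the corollary is nothing more than the specialization of the Morawetz identity \eqref{Mora} to the weight $a=|x|^2$, with the virial relation $\partial_t V_a = M_a$ giving the first equality and the critical-coupling identity $T_{00}^2 = \tfrac14|\phi|^4$ giving the last. But the step you explicitly defer as ``routine bookkeeping of constants'' is precisely where the argument as written fails. For $a=|x|^2$ in two dimensions, $\partial_j\partial_k a = 2\delta_{jk}$ and $\Delta a = 4$ (and $\Delta^2 a = 0$, as you say). Substituting into \eqref{Mora} therefore gives
\[
\partial_t M_a(t) = \int_{\R^2}\left(4|D_x\phi|^2 - 4T_{00}^2\right)dx
= 4\int_{\R^2}\left(|D_x\phi|^2 - T_{00}^2\right)dx = 8E(\phi),
\]
not $2\int_{\R^2}(|D_x\phi|^2 - T_{00}^2)\,dx$. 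Your third equality is correct---since $E(\phi)=\tfrac12\int_{\R^2}(|D_x\phi|^2 - T_{00}^2)\,dx$, one indeed has $2\int(\cdot)=4E(\phi)$---and that is exactly why the chain you write is internally inconsistent: the middle quantity cannot simultaneously equal $\partial_t M_a = 8E(\phi)$ and $4E(\phi)$.

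What a careful execution of the computation reveals is that the displayed corollary is off by a factor of $2$ relative to the paper's own lemma \eqref{Mora} and its definitions $M_a = \int_{\R^2} T_{0j}\partial_j a\,dx$, $T_{00}=\tfrac12|\phi|^2$: either the right-hand side should read $8E(\phi)$, or the weight must be taken to be $a=\tfrac12|x|^2$, whose Hessian is $\delta_{jk}$ and whose Laplacian is $2$, producing exactly $2\int_{\R^2}(|D_x\phi|^2 - T_{00}^2)\,dx = 4E(\phi)$. (The constant $8E$ for the weight $|x|^2$ is also what the classical two-dimensional cubic NLS virial identity predicts, since $\int |x|^2 T_{00}\,dx = \tfrac12\int|x|^2|\phi|^2\,dx$.) So the gap is concrete: the assertion that ``collecting constants leaves the displayed $2\int_{\R^2}(|D_x\phi|^2 - T_{00}^2)\,dx$'' is false under the stated normalization, and a complete proof must either carry the constants honestly and correct the statement, or renormalize the weight; it cannot simply reproduce the claimed chain.
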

Equation \eqref{CSS-virial} was used in \cite{BeBoSa95} to establish the existence of finite-time blow-up solutions by taking data
with negative energy or data with positive energy and sufficiently large weighted momentum.
We remark that \cite{Hu09} constructs finite-time blow-up solutions that have charge equal to that of the ground state. 
The key tool in the construction is pseudo-conformal invariance.
The fact that \eqref{CSS-virial} holds is closely tied with exact conservation laws and pseudo-conformal invariance
\cite[\S 2.4]{Tao06}.
In the case of Schr\"odinger maps, \eqref{law2} is not an exact conservation law.
Moreover, pseudo-conformal invariance fails to hold, the obstruction being the compatibility condition \cite{Hu08}.
If the compatibility condition were dropped, then $H_{Sch}$ introduced in \eqref{Hamiltonian} could be made
to be nonzero, but not otherwise for maps with sufficient decay.
Constructing blow-up solutions for Schr\"odinger maps is therefore more involved \cite{MeRaRo11, MeRaRo11a, Pe12};
see also the complementary stability result \cite{BeTa10}.

\textbf{Acknowledgments}

The author thanks the anonymous referee for helpful stylistic suggestions and minor corrections.

\appendix

\section{Gradient flow and solitons}

\begin{thm}
The energy-critical gauged harmonic map heat flow system \eqref{HMHF-gauge} is generated
by the gradient flow of
\begin{equation} \label{Hhar}
H_{Har}(\psi, A) :=
\frac12 \int_{\R^2} \left(\Re(\overline{D_j \psi_k} D_j \psi_k) -
 \mu [\Im(\bar{\psi}_2 \psi_1)]^2 \right) dx^1 \wedge dx^2 
 - \mu \frac12 \int_{\R^2} d\uA \wedge \ast d\uA
\end{equation}
provided
\begin{equation} \label{gradcon}
F_{12} = \mu \Im(\bar{\psi}_2 \psi_1),
\quad \quad
D_1 \psi_2 = D_2 \psi_1
\end{equation}
at the initial time.
\end{thm}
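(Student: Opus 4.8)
My plan is to follow the variational template of Theorem~\ref{thm:SchLag}, but with the first-order-in-time action replaced by the purely spatial functional \eqref{Hhar} and the flow read off as $L^2$ gradient descent. I would work in the temporal gauge $A_0 = 0$, so that $D_t$ collapses to $\partial_t$ and the target evolution to be produced is $\partial_t\psi_k = D_jD_j\psi_k - iF_{jk}\psi_j$; I treat $(\psi_1,\psi_2)$ and the spatial connection $\uA$ as the variables of $H_{Har}$, and I impose the constraints \eqref{gradcon} at $t = 0$.

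First I would compute the variation of \eqref{Hhar} in $\psi_k \mapsto \psi_k + \varepsilon\phi$. Integrating by parts in the covariant Dirichlet term $\frac12\Re(\overline{D_j\psi_k}D_j\psi_k)$ produces $-D_jD_j\psi_k$, and the quartic term $-\frac\mu2[\Im(\bar\psi_2\psi_1)]^2$ produces a zeroth-order contribution which, after imposing the curvature constraint $F_{12} = \mu\Im(\bar\psi_2\psi_1)$, equals $iF_{jk}\psi_j$. Thus the $L^2$ gradient is $\nabla_{\psi_k}H_{Har} = -D_jD_j\psi_k + iF_{jk}\psi_j$, and downward gradient flow $\partial_t\psi_k = -\nabla_{\psi_k}H_{Har}$ reproduces exactly the parabolic evolution of \eqref{HMHF-gauge}.

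Next I would vary the spatial connection, $\uA \mapsto \uA + \varepsilon\underline{B}$, whose two sources are the covariant derivatives in the Dirichlet term and the Maxwell term $-\frac\mu2\int d\uA\wedge\ast d\uA = -\frac\mu2\int F_{12}^2\,dx$. The Dirichlet variation supplies the pieces $\Im(\bar\psi_k D_j\psi_k)$, while the Maxwell variation supplies the curl corrections $\mu\partial_i F_{12}$; identifying in the temporal gauge $F_{0j} = \partial_t A_j = -\nabla_{A_j}H_{Har}$ and then substituting the curvature constraint and the compatibility condition $D_1\psi_2 = D_2\psi_1$, these corrections recombine to reconstruct the covariant divergence $D_k\psi_k$, collapsing the preliminary relations to the clean equations $F_{0j} = \mu\Im(\bar\psi_j D_k\psi_k)$ of \eqref{HMHF-gauge}, in parallel with the reduction of the preliminary $F_{0j}$ equations in the proof of Theorem~\ref{thm:SchLag}. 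It is precisely here that the $-\mu/2$ normalizations in \eqref{Hhar} earn their place, by furnishing the cross terms that couple $\psi$ to $\uA$.

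Finally, because \eqref{Hhar} contains no $A_0$, both constraints in \eqref{gradcon} are only initial conditions and must be shown to persist. For compatibility I would set $\Theta = D_1\psi_2 - D_2\psi_1$, derive a covariant heat equation for $\Theta$ from the evolution just found, and close a Gronwall estimate on $\frac12\int|\Theta|^2\,dx$ as in Theorem~\ref{thm:SchLag}; for the curvature constraint I would use that \eqref{geocon} holds identically, so that the defect $F_{12} - \mu\Im(\bar\psi_2\psi_1)$ obeys a homogeneous equation driven by the $F_{0j}$ relations and the $\psi_k$-flow and hence stays zero. The main obstacle I anticipate is twofold and lives in this last stage: first, tracking the sign $\mu$ so that the Maxwell normalization makes the $F_{0j}$ relations emerge correctly for both $\mu = +1$ and $\mu = -1$, which is where the precise metric on the connection variable matters; and second, that—unlike in Theorem~\ref{thm:SchLag}, where the $F_{12}$ relation dropped out directly from the $A_0$ variation—here both side conditions are genuinely dynamical, so the Gronwall argument must control the compatibility defect and the curvature defect simultaneously as a coupled system.
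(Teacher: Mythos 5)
Your template is the same as the paper's: set $A_0=0$, read the $\psi_k$ and $A_j$ flows off the first variation of \eqref{Hhar}, and close both constraints in \eqref{gradcon} by a coupled Gronwall estimate on the defects $\Theta=D_1\psi_2-D_2\psi_1$ and $\Psi=F_{12}-\mu\Im(\bar{\psi}_2\psi_1)$. The gap is precisely the point you flagged and then left unresolved: the direction of the flow for the connection variable. The first variation of \eqref{Hhar} in $A_j$ is $\Im(\bar{\psi}_k D_j\psi_k)-\mu\,\partial_k F_{jk}$, so your prescription $\partial_t A_j=-\nabla_{A_j}H_{Har}$ reads $F_{0j}=-\Im(\bar{\psi}_k D_j\psi_k)+\mu\,\partial_k F_{jk}$. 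Feeding this into the Bianchi identity \eqref{geocon} and using $\partial_1\partial_k F_{2k}-\partial_2\partial_k F_{1k}=-\Delta F_{12}$ gives
\[
(\partial_t+\mu\Delta)F_{12}=-\left[\partial_1\Im(\bar{\psi}_k D_2\psi_k)-\partial_2\Im(\bar{\psi}_k D_1\psi_k)\right],
\]
which for the sphere ($\mu=+1$) is a \emph{backward} heat equation for $F_{12}$: the coupled $(\Theta,\Psi)$ Gronwall argument you propose cannot close, and the flow is not even formally solvable forward in time. The collapse you claim also fails in that case: under \eqref{gradcon} one has $\Im(\bar{\psi}_k D_j\psi_k)=\partial_k\Im(\bar{\psi}_k\psi_j)+\Im(\bar{\psi}_j D_k\psi_k)$ and $\mu\,\partial_k F_{jk}=\partial_k\Im(\bar{\psi}_k\psi_j)$, so your equations reduce to $F_{0j}=-\Im(\bar{\psi}_j D_k\psi_k)$ independently of $\mu$, which agrees with \eqref{HMHF-gauge} only for $\mu=-1$; for the sphere you produce the negative of the required right-hand side.

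The paper's resolution, which is the crux of its proof, is to take $\partial_t A_j=+\mu\,\delta H_{Har}/\delta A_j$, i.e.
\[
F_{0j}=\mu\Im(\bar{\psi}_k D_j\psi_k)-\partial_k F_{jk},
\]
which is genuine descent in $A$ only for $\mu=-1$; for $\mu=+1$ the connection variable must flow \emph{up} the $L^2$ gradient (equivalently, one uses the metric $-\mu\langle\cdot,\cdot\rangle_{L^2}$ on the $A$-variable). With this choice $F_{12}$ obeys a forward heat equation for both targets, the coupled Gronwall closes, and the collapse under \eqref{gradcon} yields exactly $F_{0j}=\mu\Im(\bar{\psi}_j D_k\psi_k)$ as in \eqref{HMHF-gauge}. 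A second, minor point: the $\psi_k$-flow should be defined by the variation itself, namely $\partial_t\psi_1=D_jD_j\psi_1+i\mu\Im(\bar{\psi}_2\psi_1)\psi_2$ and its counterpart for $\psi_2$, and only rewritten in terms of $F_{jk}$ \emph{after} persistence of \eqref{gradcon} is proved; invoking the curvature constraint at positive times to define the flow, as your second paragraph does, is circular, since a priori that constraint holds only at $t=0$.
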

\begin{proof}
We first obtain \eqref{HMHF-gauge} with $A_0 = 0$. A posteriori one may incorporate
$A_0 \neq 0$ in the flow and retain gauge invariance. Note that \eqref{Hhar} itself is invariant
under time-independent gauge transformations.

Varying $\psi_1$ leads to
\[
- \int_{\R^2} \left[ \Re(\bar{\phi} D_j D_j \psi_1) + \mu \Im(\bar{\psi}_2 \psi_1)\Im(\bar{\psi}_2 \phi)
\right] dx^1 \wedge dx^2
\]
The associated downward gradient flow for $\psi_1$ is therefore
\begin{equation} \label{psik-flow1}
\partial_t \psi_1 = D_j D_j \psi_1 + i \mu \Im(\bar{\psi}_2 \psi_1) \psi_2
\end{equation}
Similarly, varying $\psi_2$ leads to
\begin{equation} \label{psik-flow2}
\partial_t \psi_2 = D_j D_j \psi_2 - i \mu \Im(\bar{\psi}_2 \psi_1) \psi_1
\end{equation}
Varying $A_j$ leads to
\[
B_j \Im(\bar{\psi}_k D_j \psi_k)  - B_j \mu \partial_k F_{jk}
\]
Which gradient direction we choose for $A_j$ depends upon $\mu$:
when we couple the $F_{0j}$ equations with \eqref{geocon},
we choose the sign so as to obtain a forward heat evolution for $F_{12}$ rather than a backward one.
Therefore we take
\begin{equation} \label{Fs}
F_{0j} = \partial_t A_j = \mu \Im(\bar{\psi}_k D_j \psi_k) - \partial_k F_{jk}
\end{equation}
so that coupling \eqref{Fs} with \eqref{geocon} yields
\begin{equation} \label{compare1}
\begin{split}
(\partial_t - \Delta) F_{12} &= 
\mu \left[ \partial_1 \Im(\bar{\psi}_k D_2 \psi_k) -
\partial_2 \Im(\bar{\psi}_k D_1 \psi_k) \right]  \\
&= \mu \left[ -2 \Im(\overline{D_2 \psi_k} D_1 \psi_k) + \Im(\bar{\psi}_k (D_1 D_2 - D_2 D_1) \psi_k) \right] \\
&= \mu \left[ -2 \Im(\overline{D_2 \psi_k} D_1 \psi_k) + F_{12} (|\psi_1|^2 + |\psi_2|^2) \right]
\end{split}
\end{equation}
On the other hand, using \eqref{psik-flow1}, \eqref{psik-flow2}, we get
\[
\begin{split}
\partial_t \Im(\bar{\psi}_2 \psi_1) &=
\Im(\bar{\psi}_2 \partial_t \psi_1) - \Im(\bar{\psi}_1 \partial_t \psi_2) \\
&= \Im(\bar{\psi}_2 D_j D_j \psi_1) - \Im(\bar{\psi}_1 D_j D_j \psi_2) + \mu \Im(\bar{\psi}_2 \psi_1)(|\psi_1|^2 + |\psi_2|^2) \\
&= \Delta \Im(\bar{\psi}_2 \psi_1) - 2 \Im(\overline{D_j \psi_2} D_j \psi_1) + \mu \Im(\bar{\psi}_2 \psi_1)(|\psi_1|^2 + |\psi_2|^2)
\end{split}
\]
so that
\begin{equation} \label{compare2}
(\partial_t - \Delta) \mu \Im(\bar{\psi}_2 \psi_1) = - 2 \mu \Im(\overline{D_j \psi_2} D_j \psi_1) + \Im(\bar{\psi}_2 \psi_1)(|\psi_1|^2 + |\psi_2|^2)
\end{equation}
Comparing \eqref{compare1} and \eqref{compare2}
suggests $F_{12} = \mu \Im(\bar{\psi}_2 \psi_1)$ and $D_1 \psi_2 = D_2 \psi_1$, and
we enforce this at the initial time.
Using a Gronwall inequality argument similar to that in Theorem \ref{thm:SchLag},
we conclude that these constraints persist forward in time.
In particular, set
\[
\Theta := D_1 \psi_2 - D_2 \psi_1,
\quad \quad
\Psi := F_{12} - \mu \Im(\bar{\psi}_2 \psi_1)
\]
One the one hand, we can use \eqref{psik-flow1}, \eqref{psik-flow2}, and \eqref{Fs} to obtain
\begin{equation} \label{Gron1}
(D_t - D_j D_j) \Theta = 2 i \Psi D_j \psi_j + \mu i \psi_j \Im(\bar{\psi_j} \Theta)
\end{equation}
On the other hand, subtracting \eqref{compare2} from \eqref{compare1} leads to
\begin{equation} \label{Gron2}
(\partial_t - \Delta) \Psi = 2 \mu \Im(\bar{\Theta} D_j \psi_j) + \mu \Psi (|\psi_1|^2 + |\psi_2|^2)
\end{equation}
Together \eqref{Gron1} and \eqref{Gron2} suffice for controlling  $\int_{\R^2} ( |\Theta|^2 + |\Psi|^2 )(t) dx$
for sufficiently regular $(\psi, A)$.
\end{proof}

\begin{rem}
One may ask whether
\[
\frac12 \int_{\R^2} \left(\Re(\overline{D_j \psi_k} D_j \psi_k) -
 \mu [\Im(\bar{\psi}_2 \psi_1)]^2 \right) dx^1 \wedge dx^2 
\]
of \eqref{Hhar} is conserved by the Schr\"odinger flow \eqref{SM-gauge}.
A straightforward calculation reveals
\[
\frac12 \partial_t 
\int_{\R^2} \left( \Re(\overline{D_j \psi_k} D_j \psi_k) - \mu [\Im(\bar{\psi}_2 \psi_1)]^2 \right) dx
=
\int F_{0j} T_{0j} dx
\]
One may rewrite the $F_{0j} T_{0j}$ term in various ways using
\[
\mu F_{0j} = \partial_k \Re(\bar{\psi}_j \psi_k) - \partial_j T_{00},
\quad
T_{0j} = \mu \partial_\ell F_{j \ell} + \Im(\bar{\psi}_j D_\ell \psi_\ell),
\]
though it does not appear that this term can be expected to vanish
unless $(\psi, A)$ satisfy the gauged harmonic map equations \eqref{HM-gauge}.
\end{rem}
We conclude our discussion of harmonic map heat flow by noting
that it is the main tool used in the construction of the caloric gauge (see \cite{Tao04, Sm12}).

We now turn to solitons, by which we mean steady states.
Evident from the geometric map formulation of the evolution equations is that harmonic maps
constitute the solitons for both harmonic map heat flow and Schr\"odinger maps.

\begin{lem} \label{lem:soliton}
If $(\psi, A)$ are smooth and satisfy one of the two systems
\begin{equation} \label{self-dual}
\begin{cases}
(D_1 \pm i D_2) \psi_1 &= 0 \\
(D_1 \pm i D_2) \psi_2 &= 0 \\
F_{12} &= \mu \Im(\bar{\psi}_2 \psi_1) \\
D_1 \psi_2 &= D_2 \psi_1
\end{cases}
\end{equation}
and $F_{12} \neq 0$ at at least one point,
then $(\psi, A)$ satisfy the energy-critical gauged harmonic map system \eqref{HM-gauge}.
Moreover, if we take $A_0 = 0$, then such $(\psi, A)$ also
provide stationary solutions of the gauged harmonic map
heat flow system \eqref{HMHF-gauge}
and the gauged Schr\"odinger map system \eqref{SM-gauge}.
\end{lem}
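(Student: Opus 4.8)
The plan is to verify every assertion by direct computation in the gauge-covariant calculus, handling the two sign choices in \eqref{self-dual} in parallel and reducing first to the static harmonic-map system \eqref{HM-gauge}, then reading off stationarity for the two evolutions from a single Weitzenb\"ock-type identity. The curvature relation $F_{12} = \mu\Im(\bar{\psi}_2\psi_1)$ and the compatibility condition $D_1\psi_2 = D_2\psi_1$ are common to \eqref{self-dual} and \eqref{HM-gauge}, so the only new content in the first conclusion is the divergence equation $D_j\psi_j = 0$.

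First I would show that the two equations $(D_1 \pm iD_2)\psi_k = 0$ together with compatibility already force $D_j\psi_j = 0$. Taking the upper sign, $(D_1+iD_2)\psi_1 = 0$ gives $D_1\psi_1 = -iD_2\psi_1$, while substituting compatibility into $(D_1+iD_2)\psi_2 = 0$ gives $D_2\psi_1 = D_1\psi_2 = -iD_2\psi_2$; feeding this back yields $D_1\psi_1 = -iD_2\psi_1 = -D_2\psi_2$, i.e. $D_1\psi_1 + D_2\psi_2 = 0$. The lower sign is identical with every $i$ replaced by $-i$. Since the curvature and compatibility relations are inherited verbatim, \eqref{HM-gauge} holds.

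Next I would record the identity $D_jD_j\psi_k = iF_{jk}\psi_j$, obtained by using compatibility to write $D_jD_j\psi_k = D_jD_k\psi_j$, commuting via $[D_j,D_k] = iF_{jk}$, and invoking $D_j\psi_j = 0$ from the previous step. With $A_0 = 0$ the covariant time derivative is just $\partial_t$, so a time-independent configuration has $D_t\psi_k = 0$. The right-hand side of the heat evolution in \eqref{HMHF-gauge} is then $D_jD_j\psi_k - iF_{jk}\psi_j = 0$ and that of the Schr\"odinger evolution in \eqref{SM-gauge} is $iD_jD_j\psi_k + F_{jk}\psi_j = 0$, both immediate from the identity. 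The remaining constraints hold trivially: staticity with $A_0 = 0$ gives $F_{0j} = 0$, while each right-hand side $\mu\Im(\bar{\psi}_k D_j\psi_j)$ or $\mu\Re(\bar{\psi}_k D_j\psi_j)$ vanishes because $D_j\psi_j = 0$, and the $F_{12}$ equation and compatibility are unchanged. Hence $(\psi,A)$ with $A_0 = 0$ is a steady state of both flows.

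Finally I would clarify the role of the hypothesis $F_{12}\neq 0$ through the combination $\chi := \psi_1 \pm i\psi_2$ with sign matching \eqref{self-dual}. Repeating the manipulations of the first step shows $D_1\chi = D_2\chi = 0$, so $\chi$ is covariantly constant; thus $|\chi|$ is constant, while $[D_1,D_2]\chi = iF_{12}\chi = 0$ forces $F_{12}\chi \equiv 0$ pointwise. As $F_{12}\neq 0$ somewhere, $\chi$ vanishes there, hence $\chi\equiv 0$ by constancy of $|\chi|$, yielding the rigidity $\psi_1 = \mp i\psi_2$ and, through $F_{12} = \mu\Im(\bar{\psi}_2\psi_1)$, a curvature of definite sign $\mp\mu|\psi_2|^2$; this is what certifies that the solutions are genuine, non-constant (anti-)holomorphic harmonic maps rather than the flat vacuum. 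I expect the only real friction in the argument to be sign bookkeeping--keeping the $\pm$ of \eqref{self-dual}, the value $\mu = \pm 1$ of the target, and the induced sign of $F_{12}$ mutually consistent across the two cases--rather than any conceptual obstacle.
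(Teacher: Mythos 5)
Your proof is correct, and its route to the key identity $D_j\psi_j=0$ is genuinely different from---and more economical than---the paper's. The paper first uses compatibility to show that $\chi:=\psi_1+i\psi_2$ is covariantly constant, then combines $(D_1D_2-D_2D_1)\chi=iF_{12}\chi=0$ with the hypothesis that $F_{12}\neq 0$ at some point and with the constancy of $|\chi|$ to conclude the rigidity $\psi_1=-i\psi_2$ everywhere, and only then deduces $D_j\psi_j=(D_1+iD_2)\psi_1=0$. You instead obtain $D_j\psi_j=0$ by pointwise algebra, chaining $D_1\psi_1=-iD_2\psi_1=-iD_1\psi_2=(-i)^2D_2\psi_2=-D_2\psi_2$ from the two self-dual equations and compatibility alone. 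The notable consequence is that your argument never invokes the hypothesis $F_{12}\neq 0$: the stated conclusions---the static system \eqref{HM-gauge}, and stationarity under \eqref{HMHF-gauge} and \eqref{SM-gauge} once $A_0=0$---hold without it, so that hypothesis is superfluous for the lemma as stated, while it is indispensable for the paper's argument. (For instance, $A\equiv 0$ and $\psi_1=\psi_2\equiv c\neq 0$ constant satisfy \eqref{self-dual} with $F_{12}\equiv 0$ and violate the rigidity $\psi_1=-i\psi_2$, yet form a stationary solution, exactly as your proof predicts and the paper's route cannot reach.) What the paper's approach buys is the rigidity itself, which identifies the solitons as (anti-)holomorphic configurations with curvature of definite sign; you recover that statement, with correct signs, but as a supplementary remark contingent on the nonvanishing hypothesis rather than as a load-bearing step. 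The stationarity half of your argument---the Weitzenb\"ock-type identity $D_jD_j\psi_k=iF_{jk}\psi_j$ obtained from compatibility, commutation, and $D_j\psi_j=0$, together with $F_{0j}=0$ and the vanishing of the $F_{0j}$ right-hand sides because $D_j\psi_j=0$---is the same computation the paper performs.
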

\begin{proof}
We consider solutions of \eqref{self-dual} with $``+"$, as solutions of \eqref{self-dual} with $``-"$ may be handled
similarly.
Invoking the compatibility condition $D_1 \psi_2 = D_2 \psi_1$, 
we conclude from the first two equations of \eqref{self-dual} that
\begin{equation} \label{CR}
\begin{cases}
D_1(\psi_1 + i \psi_2) &= 0 \\
D_2(\psi_1 + i \psi_2) &= 0
\end{cases}
\end{equation}
Differentiating \eqref{CR} leads to
\[
(D_1 D_2 - D_2 D_1) (\psi_1 + i \psi_2) = i F_{12} (\psi_1 + i \psi_2) = 0
\]
which in view of the fact that $F_{12} \neq 0$
at at least one point, implies that
\begin{equation} \label{harmonic}
\psi_1 = - i \psi_2
\end{equation}
holds at some $x \in \R^2$.
Thanks to \eqref{CR}, we have for $j = 1, 2$ that
\[
\partial_j |\psi_1 + i \psi_2|^2 = 0
\]
for all $x \in \R^2$. Therefore \eqref{harmonic} also holds for all $x \in \R^2$.
Together \eqref{harmonic} and \eqref{self-dual} imply
\[
D_j \psi_j = (D_1 + i D_2) \psi_1 = 0
\]
To see that \eqref{HMHF-gauge} and \eqref{SM-gauge} are satisfied, it only remains to use
the compatibility and spatial curvature conditions to write
\[
D_j D_j \psi_k - i F_{jk} \psi_j = D_j D_k \psi_j - i F_{jk} \psi_j = D_k D_j \psi_j = 0
\]
\end{proof}

\noindent \textbf{Comparison with Chern-Simons-Schr\"odinger.}
The gradient flow of
\[
\frac12 \int_{\R^2} \left[ |D_x \phi|^2 - \frac12 |\phi|^4 \right] dx^1 \wedge dx^2
+ \frac12 \int_{\R^2} d\uA \wedge \ast d\uA
\]
yields
\[
\begin{cases}
D_t \phi &= D_\ell D_\ell \phi + |\phi|^2 \phi \\
F_{01} &= -\Im(\bar{\phi} D_1 \phi) - \partial_2 F_{12} \\
F_{02} &= -\Im(\bar{\phi} D_2 \phi) + \partial_1 F_{12}
\end{cases}
\]
which with $d^2 A = 0$ imply 
\begin{equation} \label{F12-gradflow}
(\partial_t - \Delta) F_{12} = \partial_1 \Im(\bar{\phi} D_2 \phi) - \partial_2 \Im(\bar{\phi} D_1 \phi)
\end{equation}
In analogy with the caloric gauge for Schr\"odinger maps, the Chern-Simons gradient flow
can be used to construct a caloric gauge for the Chern-Simons-Schr\"odinger system.
Doing so requires initializing \eqref{F12-gradflow} with suitable data at time zero.
Both caloric gauges can be interpreted as modifications of the Coulomb gauge.
In the case of Chern-Simons-Schr\"odinger, the modification is not sufficient to render the caloric
gauge a favorable alternative to the Coulomb gauge for the purposes of establishing wellposedness.
Behind these differences in behavior is the difference between the $F_{12}$ curvature terms.

Stationary solutions of \eqref{CSS} are the so-called self-dual and anti-self-dual Chern-Simons solitons,
which are solutions of
\[
\begin{cases}
(D_1 \pm i D_2) \phi &= 0 \\
F_{01} &= - \Im(\bar{\phi} D_2 \phi) \\
F_{02} &= \Im(\bar{\phi} D_1 \phi) \\
F_{12} &= - \frac12 |\phi|^2 \\
\end{cases}
\]
Note that $A_0$ is not identically zero so long as $\phi$ is not. We refer the reader to \cite{HuSe13, DuJaPi91, DuJaPi92}.

\bibliographystyle{amsplain}

\end{document}